\newtheorem{thm}{Theorem}[section]
\newtheorem*{quest*}{Question}
\newtheorem{theorem}[thm]{Theorem}
\newtheorem{cor}[thm]{Corollary}
\newtheorem{prop}[thm]{Proposition}
\newtheorem{lemma}[thm]{Lemma}
\newtheorem{remark}[thm]{Remark}
\newtheorem*{theorem*}{Theorem}
\newcommand{\bbf}{{\mathbb{F}}}
\newcommand{\bbq}{{\mathbb{Q}}}
\newcommand{\bbz}{{\mathbb{Z}}}
\newcommand{\cB}{\mathcal{B}}
\newcommand{\cC}{\mathcal{C}}
\newcommand{\cN}{\mathcal{N}}
\newcommand{\bs}[1]{\left({#1}\right)}
\newcommand{\Gal}[1]{\operatorname{Gal}\bs{{#1}}}
\newcommand{\diag}[1]{\operatorname{diag}\left({#1}\right)}
\renewcommand{\Im}{\operatorname{Im}}
\newcommand{\scrT}{\mathscr{T}}
\newcommand{\tors}{\operatorname{tors}}
\newcommand{\Lt}{\widetilde{L}}
\newcommand{\Fl}{\bbf_{\ell}}
\newcommand{\Flx}{\Fl^{\times}}
\newcommand{\Fll}{\bbf_{\ell^{2}}}
\newcommand{\Fllx}{\Fll^{\times}}
\newcommand{\cyc}{\operatorname{cyc}_{\ell}}
\newcommand{\GLl}{\operatorname{GL}_{2}\left(\Fl\right)}
\newcommand{\SLl}{\operatorname{SL}_{2}\left(\Fl\right)}
\newcommand{\GLll}{\operatorname{GL}_{2}\left(\Fll\right)}
\newcommand{\GL}[1]{\operatorname{GL}_{2}\left(\bbz/{#1}\bbz\right)}
\newcommand{\Sym}{\mathfrak{S}}
\newcommand{\Alt}{\mathfrak{A}}
\title[Stability of torsion subgroups of elliptic curves]{Stability of torsion subgroups of elliptic curves over non-Galois extensions of odd prime degree}
\author{Bo-Hae Im}
\address{Department of Mathematical Sciences, KAIST, 291~Daehak-ro, Yuseong-gu, Daejeon, 34141, South Korea}
\email{bhim@kaist.ac.kr}
\thanks{Bo-Hae Im was supported by Basic Science Research Program through the National Research Foundation of Korea(NRF) grant funded by the Korea government(MSIT)(NRF-2023R1A2C1002385, or RS-2023-NR076333).}
\author{Hansol Kim}
\address{Institute of Mathematics, Academia Sinica, 6F, Astronomy-Mathematics Building, No.~1, Sec.~4, Roosevelt Road, Taipei 10617, Taiwan}
\email{jawlang@gate.sinica.edu.tw}
\thanks{Hansol Kim was supported by NSTC grant funded by Taiwan government (MST) (No.~114-2811-M-001-036).}
\date{\today}
\subjclass[2010]{Primary: 11G05, Secondary: 14H52}
\keywords{elliptic curve, torsion subgroup}
\begin{document}

\maketitle

\begin{abstract}
	Let $K$ be a field of characteristic $0$ and $E/K$ an elliptic curve over $K$. For a finite extension $L/K$ and a prime~$\ell$, we provide Galois-theoretic sufficient conditions on $L/K$ under which $E\left(L\right)\left[\ell^{\infty}\right] = E\left(K\right)\left[\ell^{\infty}\right]$. For a non-Galois extension $L/K$ of prime degree, we relate the growth of the $\ell^{\infty}$-torsion subgroup of $E$ under the base change $L/K$ to the image of the mod-$\ell$ cyclotomic character. In particular, In particular, we  refine Gonz{\'a}lez-Jim{\'e}nez's result by ruling out certain torsion structures for quintic non-Galois extensions $L/\bbq$.
\end{abstract}

\section{Introduction}\label{sec:intro}
Elliptic curves are main objects in number theory and algebraic geometry. A remarkable structure of elliptic curves is being abelian varieties. More precisely, for an elliptic curve $E/F$ over a field $F$, the set $E\left(F\right)$ of all $F$-rational points form an abelian group under a certain addition law. The most fundamental theorem for the structure $E\left(F\right)$ is the Mordell-Weil theorem (\cite[VIII.Theorem~6.7]{Silverman}), which states that $E\left(F\right)$ is finitely generated if $F$ is a number field. In other words, the Mordell-Weil theorem implies that $$
	E\left(F\right)\cong \mathscr{T} \oplus \bbz^{r},
$$ for some finite abelian group $\mathscr{T}$ and a non-negative integer $r$. The finite abelian group $\mathscr{T}$ and the non-negative integer $r$ are called the torsion subgroup and the rank of $E$ over $F$, respectively. The torsion subgroup $\mathscr{T}$ of $E/F$ is usually denoted by ${E\left(F\right)}_{\tors}$. 

Even over a general field $F$, for a positive integer $N$ which is not divisible by the characteristic of $F$, it is known that $$
	{E\left(F\right)}\left[N\right] \cong \bbz/m\bbz \oplus \bbz/n\bbz
$$ for some positive divisors $m, n$ of $N$ such that $m \mid n$. Building upon this foundation, the torsion subgroups of elliptic curves have been extensively studied. 

Let $E/K$ an elliptic curve defined over $K$ of characteristic $0$ and $L/K$ be a field extension.
One direction of this research focuses on the case $L=K$. Writing $d=\left[K:\bbq\right]$, the possible isomorphic classes of ${E\left(K\right)}_{\tors}$ for  elliptic curves $E/K$ defined over $K$ with degree $d\le 6$ have been completely classified:  for $d=1$ by Mazur \cite{Mazur}; for~$d=2$ by Kenku-Momose \cite{Kenku_Momose} and Kamienny~\cite{Kamienny};  for $d=3$ by Jeon-Kim-Schweizer \cite[Theorem~3.4]{JKS04}; for $d=4$ by Jeon-Kim-Park \cite[Theorem~3.6]{JKP06}; and  for $d=5, 6$ by Derickx-Sutherland \cite[Theorem~1.1]{DS17}.

On the other hand, when $K = \bbq$, the possible isomorphism classes of ${E\left(L\right)}_{\tors}$ have been determined for specific values of $\left[L:\bbq\right]$: when $\left[L:\bbq\right]=2$ by \cite{GoTo14} and \cite{Na16}; $\left[L:\bbq\right]=3$ by \cite{Na16}; $\left[L:\bbq\right]=4$ by \cite{Ch16}, $\left[L:\bbq\right]=5$ by \cite{Go17}; and for prime degrees $\left[L:\bbq\right] \ge 7$ by \cite{GoNa20}.

If $K$ is a general number field, certain conditions under which  the minimal prime divisor of $\left[L:K\right]$ guarantees that ${E\left(L\right)}_{\tors} = {E\left(K\right)}_{\tors}$ are studied in \cite{G24} and \cite{IK24}.

In the studies mentioned above, ${E\left(L\right)}_{\tors}$ is described mainly in terms of the extension degrees $\left[K:\bbq\right]$ and $\left[L:K\right]$. There is another approach which considers the Galois theory of $L/K$: for example, \cite{ChDKNa21} considers the case when $K=\bbq$ and $L$ is the $\bbz_{p}$-extensions of $\bbq$, and \cite{Ch19} studies the case when $K=\bbq$ and $L$ is the maximal abelian extensions of $\bbq$.

In this paper, we investigate ${E\left(L\right)}_{\tors}$ under more general assumptions on  $K$ and $L/K$. Let $\Lt/K$ denote the Galois closure of $L/K$. We show that, under specific conditions on $\Gal{\Lt/K}$ and $\Gal{\Lt/L}$, one has $$
	{E\left(L\right)}_{\tors} = {E\left(K\right)}_{\tors}.
$$ 

Unlike prior research which primarily focuses on extensions over $\bbq$ or on low-degree number fields, our results in this paper are applicable to arbitrary extensions  $L/K$ of characteristic~$0$.
Moreover, rather than describing torsion growth solely in terms of extension degrees, our work identifies explicit Galois-theoretic conditions under which the torsion subgroup does not grow, thereby clarifying precisely when torsion remains unchanged and distinguishing our approach from existing degree-based classifications.

Our main result  in this paper is as follows:

\begin{theorem}\label{thm:non-Gal_p}
	Let $K$ be a field of characteristic $0$ and $E/K$ an elliptic curve over $K$. Let $L/K$ be a non-Galois extension of prime degree $p\ge 3$. Then the following holds for any prime~$\ell$ if $p=3$, and for any prime $\ell\neq p$ if $p\neq 3$: \begin{enumerate}[{\normalfont (a)}]
		\item  If $E\left(K\right)\left[\ell\right] \ne \left\{O\right\}$,  then $$
			E\left(L\right)\left[\ell^{\infty}\right] = E\left(K\right)\left[\ell^{\infty}\right].
		$$
		\item Let $\ell\ge 3$ be a prime with $ E\left(K\right)\left[\ell\right]= \left\{O\right\}$. If $\Im \cyc \neq \left\{\pm 1\right\}$, then $$
			E\left(L\right)\left[\ell\right] =\left\{O\right\}= E\left(K\right)\left[\ell\right],
		$$ where $\cyc$ is the mod-$\ell$ cyclotomic character of the absolute Galois group of $K$.
		\item  If $p \ge 5$ and $E\left(K\right)\left[2\right] = \left\{O\right\}$, then $$
			E\left(L\right)\left[2\right] =\left\{O\right\}= E\left(K\right)\left[2\right].
		$$
	\end{enumerate}
\end{theorem}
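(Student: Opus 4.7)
The plan is to handle the three parts in order of increasing difficulty.

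For part~(c), a degree count on the affine model suffices. The hypothesis $E(K)[2]=\{O\}$ says $f$ has no root in $K$, so $f$ is irreducible over $K$ (being cubic). Any nonzero 2-torsion point is $(\alpha,0)$ with $f(\alpha)=0$, and $K(\alpha)/K$ has degree $3$. If $E(L)[2]\neq\{O\}$, then $L\supseteq K(\alpha)$, forcing $3\mid[L:K]=p$, impossible for a prime $p\geq 5$.

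For part~(a), I would induct on $n$ to show $E(L)[\ell^n]=E(K)[\ell^n]$. The base case $n=1$ is clean: given $P\in E(L)[\ell]\setminus E(K)[\ell]$ and any nonzero $P_0\in E(K)[\ell]$, the pair $\{P,P_0\}$ is $\Fl$-linearly independent (else $P\in E(K)$), so $E[\ell]\subseteq E(L)$ and $K(E[\ell])\subseteq L$; the tower law forces $[K(E[\ell]):K]\in\{1,p\}$, where the first gives $P\in E(K)$ and the second makes $L=K(E[\ell])$ a Galois extension of $K$, contradicting the non-Galois hypothesis. For the inductive step when $\ell\neq p$, take $P\in E(L)[\ell^n]\setminus E(K)[\ell^n]$ with $\ell P\in E(K)$ (by induction) and form the cocycle $\phi\colon G'\to E[\ell]$, $\sigma\mapsto\sigma P-P$, where $G'=\Gal{K(E[\ell^n])/K}$ and $H'=\Gal{K(E[\ell^n])/L}$; this vanishes on $H'$. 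The Galois trace $T=\sum g_iP$ of the $G'$-orbit lies in $E(K)$, so $S:=T-pP=\sum\phi(g_i)$ is $G'$-invariant $\ell$-torsion, hence in $E(K)[\ell]$ by the base case; invertibility of $p$ modulo $\ell$ gives $v:=p^{-1}S\in E(K)[\ell]\subseteq E(K)$, and a direct computation shows $\phi=-dv\equiv 0$, so $P\in E(K)$---contradiction. The remaining case $\ell=p=3$ of the inductive step, where $p$ is not a unit in $\Fl$, requires extra work (controlling $\ker(H^1(G',E[3])\to H^1(H',E[3]))$, or exploiting that any finite quotient of $G'$ must retain $\Gal{\Lt/K}\cong\Sym_3$ and hence cannot be a $3$-group); this is one of the main obstacles.

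For part~(b), suppose $E(L)[\ell]\neq\{O\}$ for contradiction and pick nonzero $P\in E(L)[\ell]$; since $P\notin E(K)$ and $[K(P):K]\mid p$, we get $K(P)=L\subseteq M:=K(E[\ell])$. With $G'=\Gal{M/K}$ and $H'=\Gal{M/L}$, the stabilizer of $P$ in $G'$ is $H'$ by primality, and the stabilizer of $\langle P\rangle$ is either $H'$ or $G'$; the latter yields a nontrivial character $G'\to\Flx$ with kernel $H'$, forcing $H'$ normal and contradicting non-Galois. Hence the $G'$-orbit of $\langle P\rangle$ has size $p$, the $p$ distinct conjugate lines span $E[\ell]$ and so $\Lt=M$, $G'$ acts faithfully on the $p$-set with no nontrivial scalars, and $\det(G')=\Im\cyc$ by the Weil pairing. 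I would then split by whether $E[\ell]$ is reducible or irreducible as $G'$-module. In the reducible case, a $G'$-stable line $W\neq\langle P\rangle$ (the case $W=\langle P\rangle$ having been ruled out above) gives, in a suitable basis, a lower-right entry that defines a homomorphism $G'\to\Flx$ vanishing on $H'$, hence trivial by non-normality; this reduces $\rho$ to unipotent-by-character form and forces $p=\ell$, which together with $\Im\cyc\neq\{\pm 1\}$ yields a contradiction (in the borderline case $p=\ell=3$ the hypothesis forces $\zeta_3\in K$, putting $\rho(G')$ inside a unipotent subgroup of order $\leq\ell=3$, contradicting $|G'|\geq 6$). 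In the irreducible case, I would invoke Dickson's classification of subgroups of $\operatorname{PGL}_2(\Fl)$ together with the transitive $p$-action on lines and the determinant constraint $\det(G')=\Im\cyc\neq\{\pm 1\}$, splitting further according to whether $p$ divides $\ell$, $\ell-1$, or $\ell+1$. This irreducible subcase, where the $p$-transitivity must be played against the cyclotomic image, is the most delicate step and the other main obstacle I anticipate.
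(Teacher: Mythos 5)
Your part~(c) is exactly the paper's argument, and your part~(a) for $\ell\ne p$ is correct but takes a genuinely different route: where the paper passes through Lemma~2.1, Proposition~2.3 and Corollary~2.4 (showing torsion growth would create an abelian sub-extension that a non-Galois prime-degree extension cannot have), you run a direct induction with an averaging/corestriction argument, using $\gcd(p,\ell)=1$ to kill the cocycle $\sigma\mapsto P^{\sigma}-P$. One imprecision: your $S=T-pP$ is not $G'$-invariant a priori (one computes $S^{\sigma}=S-p\,\phi(\sigma)$), but it does lie in $E(L)[\ell]=E(K)[\ell]$ by your base case, which suffices; then $p\,\phi(\sigma)=S-S^{\sigma}=O$ and invertibility of $p$ mod $\ell$ finish the step. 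Your base case is clean and works for all $\ell$, including $\ell=p$.

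However, the two places you flag as ``main obstacles'' are genuine gaps, and they are precisely where the paper does its real work. First, part~(a) at $\ell=p=3$ is part of the assertion, your corestriction trick fails there, and neither of your suggested repairs is carried out; the paper handles this case by a separate argument modelled on the proof of Theorem~3.2, exploiting the $\Sym_{3}$-structure of the Galois closure together with Proposition~2.3. Second, and more seriously, part~(b) is essentially \emph{only} the case analysis you postpone. After your (correct) reduction to $G=\Gal{K(E[\ell])/K}$ acting faithfully and transitively on $p$ lines with $\det G=\Im\cyc$, and after your reducible (Borel) case --- which is sound and matches the paper's case~(B) --- what remains is the content of Propositions~4.1 and~4.2: ruling out the $\SLl$-containing and exceptional cases by explicit order and normal-subgroup arguments inside $\Sym_{p}$ (e.g.\ for $p=5$ and quotient $\Alt_{5}$ one must produce $-I_{2}$ as a normal subgroup of order $2$), and, in the Cartan-normalizer case, actually computing that $G$ is generated by an element $A$ of order $p$ in $\cC$ with $\det A=1$ and an involution outside $\cC$ of determinant $-1$, so that $\Im\cyc=\det G=\{\pm 1\}$. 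The hypothesis $\Im\cyc\ne\{\pm1\}$ enters only through these computations, so ``invoke Dickson and split into cases'' is the right plan but, as written, the proposal does not prove part~(b).
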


Before stating our next result, we note that our analysis, which leverages the properties of non-Galois quintic extensions, allows us to refine the classification of torsion subgroups over quintic extensions, given by Gonz{\'a}lez-Jim{\'e}nez~\cite[Theorem~1]{Go17}. In particular, certain torsion structures that can occur over general quintic extensions can be excluded in the non-Galois case by our result, Theorem~\ref{thm:non-Gal_p} when $K=\bbq$ and $p=5$, refining \cite[Theorem~1]{Go17} as follows:

\begin{cor}[{Refinement of \cite[Theorem~1]{Go17}}]\label{cor:ref_Go17}
	Let $E/\bbq$ an elliptic curve over $\bbq$  and $L/\bbq$ a non-Galois quintic extension. Then, the torsion subgroup $E\left(L\right)_{\tors}$ is isomorphic to one of the following: \begin{align*}
		\bbz/n\bbz, &\quad \text{ for } n \in \left\{1,2, \ldots, 10, 12\right\}, \text{ or }\\
		\bbz/2\bbz \oplus \bbz/2n\bbz, &\quad \text{ for } n \in \left\{1,2,3,4\right\}.
	\end{align*}
\end{cor}

Another application of Theorem~\ref{thm:non-Gal_p} is that it provides a single, unified proof of a refinement of the classification of torsion subgroups over non-Galois cubic fields originally obtained through three separate results. Najman~\cite[Theorem~1]{Na16} classified the possible torsion subgroups over cubic extensions. For the case of non-Galois cubic fields, two specific torsion subgroups $\bbz/13\bbz$ and $\bbz/2\bbz \oplus \bbz/14\bbz$  are ruled out by \cite[Lemma~2.2]{JS20} and \cite[Theorem~1.2]{BrNa17}, respectively. Theorem~\ref{thm:non-Gal_p} rules out both groups at once, providing a streamlined and conceptually unified independent proof  given in Section~\ref{sec:application} of the following corollary.

\begin{cor}[{\cite[Theorem~1]{Na16}, \cite[Theorem~1.2]{BrNa17}, \cite[Lemma~2.2]{JS20}}]\label{cor:ref_Na16}
	Let $E/\bbq$ an elliptic curve over $\bbq$  and $L/\bbq$ a non-Galois cubic extension. Then, the torsion subgroup $E\left(L\right)_{\tors}$ is isomorphic to one of the following: \begin{align*}
		\bbz/n\bbz, &\quad \text{ for } n \in \left\{1,2, \ldots, 10, 12, 14, 18\right\}, \text{ or }\\
		\bbz/2\bbz \oplus \bbz/2n\bbz, &\quad \text{ for } n \in \left\{1,2,3,4\right\}.
	\end{align*}
\end{cor}

\

To prove Theorem~\ref{thm:non-Gal_p}, let $L/K$ be a field extension of characteristic~$0$, $E/K$ an elliptic curve, and $\ell$ a prime. We establish Galois-theoretic conditions on $L/K$ and $\ell$ ensuring $$
	E\left(L\right)\left[\ell^{\infty}\right] = E\left(K\right)\left[\ell^{\infty}\right].
$$ 
The conditions differ substantially depending on whether $E\left(K\right)\left[\ell\right]$ is trivial:

\begin{itemize}
	\item If $E\left(K\right)\left[\ell\right]\neq \left\{O\right\}$, the sufficient condition is given in Corollary~\ref{cor:ell_non_zero} (Section~\ref{sec:non_zero}).
	\item If $E\left(K\right)\left[\ell\right] = \left\{O\right\}$, the situation is quite different, and the sufficient conditions are provided in Proposition~\ref{prop:non-Gal_3} and Proposition~\ref{prop:non-Gal_prime} (Section~\ref{sec:zero}).
\end{itemize}

The proof of  Theorem~\ref{thm:non-Gal_p} will be given in Section~\ref{sec:zero}, where we establish the conditions on $L/K$ that guarantee the non-growth of the rational $\ell$-primary torsion part.

\

In Section~\ref{sec:additional}, using the lemmas developed for the proof  of Theorem~\ref{thm:non-Gal_p}, we also establish two additional results, namely the following Theorem~\ref{thm:thm2} and Theorem~\ref{thm:Sn}.

\begin{theorem}\label{thm:thm2}
	Let $K$ be a field of characteristic $0$ and $E/K$ an elliptic curve defined over $K$. Let $L/K$ be an extension of $K$. For a positive integer $m$, if $\ell^{m} \ge 3$ and $E\left(K\right) \supseteq \left(\bbz/\ell^{m}\bbz\right)^{2}$, then for every $k\ge0$,  we have $$
		E\left(K\left(\scrT_{k}\right)\right)\left[\ell^{\infty}\right] = \scrT_{k},
	$$ where $\scrT_{k} = \left\{P \in E\left(K\right)\left[\ell^{\infty}\right]:\ell^{k}P \in E\left(K\right)\right\}$.
\end{theorem}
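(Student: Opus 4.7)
The plan is to prove Theorem~\ref{thm:thm2} by induction on $k$. The base case $k = 0$ is trivial: $\scrT_0 = E(K)[\ell^\infty]$ and $K(\scrT_0) = K$. For the inductive step, I set $K' := K(\scrT_{k-1})$; the inductive hypothesis gives $E(K')[\ell^\infty] = \scrT_{k-1}$, and the hypotheses $E[\ell^m] \subseteq E(K) \subseteq E(K')$ and $\ell^m \ge 3$ persist over $K'$. Unpacking definitions, $\scrT_k$ coincides with the analogously-defined $\scrT'_1 := \{P \in E(\overline{K})[\ell^\infty] : \ell P \in E(K')\}$ over $K'$, and $K'(\scrT'_1) = K(\scrT_k)$. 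Hence it suffices to establish the case $k = 1$.

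For $k = 1$, let $F := K(\scrT_1)$ and $G := \mathrm{Gal}(F/K)$. For every $P \in \scrT_1$ and $\sigma \in G_K$, the identity $\ell(\sigma P - P) = \sigma(\ell P) - \ell P = O$ (using $\ell P \in E(K)$) forces $\sigma P - P \in E[\ell] \subseteq E(K)$. Thus $\sigma \mapsto (\sigma - 1)|_{\scrT_1}$ yields an injective homomorphism from $G$ into the elementary abelian $\ell$-group $\mathrm{Hom}(\scrT_1/\scrT_0, E[\ell])$, so $G$ itself has exponent $\ell$.

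For the non-trivial inclusion, fix $Q \in E(F)[\ell^\infty]$; the goal is to show $\ell Q \in E(K)$, equivalently $\sigma Q - Q \in E[\ell]$ for every $\sigma \in G_K$. Define the cocycle $\psi \colon G \to E(F)[\ell^\infty]$ by $\psi(\sigma) := \sigma Q - Q$; the cocycle identity combined with $\sigma^\ell = 1$ in $G$ yields $(1 + \sigma + \cdots + \sigma^{\ell-1})\psi(\sigma) = \psi(\sigma^\ell) = 0$. Lifting $\sigma$ to $G_K$ and writing $\rho(\sigma) = I + \ell^m A$ on the Tate module (valid since $E[\ell^m] \subseteq E(K)$ forces $\rho(G_K) \subseteq 1 + \ell^m M_2(\bbz_\ell)$), a binomial expansion gives
\[
1 + \sigma + \cdots + \sigma^{\ell-1} \;=\; \ell I + \tbinom{\ell}{2} \ell^m A + \cdots + \ell^{m(\ell-1)} A^{\ell-1} \;=\; \ell M
\]
with $M \equiv I \pmod{\ell}$ provided $\ell^m \ge 3$. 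Then $\ell M\psi(\sigma) = 0$ combined with an order comparison on $\psi(\sigma) = M\psi(\sigma) - (M - I)\psi(\sigma)$ (in which $(M-I)\psi(\sigma) \in \ell \cdot E[\ell^\infty]$) forces $\psi(\sigma) \in E[\ell]$, giving $\ell Q \in E(K)$.

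The main obstacle is verifying the norm computation $1 + \sigma + \cdots + \sigma^{\ell-1} = \ell M$ with $M \equiv I \pmod{\ell}$. The sub-leading coefficients $\tbinom{\ell}{j+1} \ell^{mj} A^j$ for $0 \le j \le \ell - 2$ automatically have $\ell$-valuation at least $1 + mj$, so after dividing by $\ell$ they land in $\ell M_2(\bbz_\ell)$. The only delicate term is the top-degree $\ell^{m(\ell-1)} A^{\ell-1}$, whose $\ell$-valuation is $m(\ell - 1)$, and this exceeds $1$ exactly when $\ell^m \ge 3$ (ruling out the pathological case $(\ell, m) = (2, 1)$ in which $1 + \ell^m M_2(\bbz_\ell)$ fails to be torsion-free). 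This tameness of the $\ell$-th power map on the principal congruence subgroup is the structural fact that also underlies the lemmas developed for Theorem~\ref{thm:non-Gal_p}.
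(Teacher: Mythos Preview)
Your proof is correct and takes a genuinely different (and cleaner) route than the paper's.

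\textbf{Inductive scheme.} The paper inducts along the indices $(2^{i}-1)r$ (with $r$ maximal such that $E[\ell^{r}]\subseteq E(K)$), applying Lemma~\ref{lem:ell_sq}(e) with doubling parameter at each step and then interpolating to a given $k$. You simply induct $k-1\to k$, passing to $K':=K(\scrT_{k-1})$ and reducing to the single case $k=1$ over the new base. This is strictly simpler bookkeeping, and it works because the hypothesis $E[\ell^{m}]\subseteq E(K)$ with $\ell^{m}\ge 3$ already supplies exactly what Lemma~\ref{lem:ell_sq}(e) needs in the case $r=1$.

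\textbf{The $k=1$ step.} The paper (Lemma~\ref{lem:ell_sq}(e)) argues by hand, writing $V^{\sigma^{i}}=V+iR+\tbinom{i}{2}S$ and using that $\ell\mid\tbinom{\ell^{r}}{2}$ when $\ell^{r}\ge 3$. Your Tate-module computation is the same idea one level up: with $\rho(\tilde\sigma)=I+\ell^{m}A$ the hockey-stick identity gives $\sum_{j<\ell}\rho(\tilde\sigma)^{j}=\ell M$ with $M\equiv I\pmod{\ell}$, and the delicate top term $\ell^{m(\ell-1)}A^{\ell-1}$ is precisely where $\ell^{m}\ge 3$ enters. Both arguments exploit the exponent-$\ell$ structure of $\Gal{K(\scrT_{1})/K}$ and the congruence $\rho(G_{K})\subseteq 1+\ell^{m}M_{2}(\bbz_{\ell})$; yours packages them more uniformly.

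\textbf{One imprecision.} The clause ``$(M-I)\psi(\sigma)\in\ell\cdot E[\ell^{\infty}]$'' is vacuous, since $E[\ell^{\infty}]$ is $\ell$-divisible. What you actually use is that $M\equiv I\pmod\ell$ forces $M\in\operatorname{GL}_{2}(\bbz_{\ell})$, so $M$ acts as an automorphism of $E[\ell^{\infty}]$ and hence preserves order; then $\ell M\psi(\sigma)=0$ immediately gives $\psi(\sigma)\in E[\ell]$. State it that way and the argument is clean.
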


\begin{theorem}\label{thm:Sn}
	Let $K$ be a field of characteristic $0$, and $f \in K\left[x\right]$ a monic polynomial of degree $n\ge5$, with zeros $\alpha_{1},\alpha_{2},\cdots,\alpha_{n}$ in an algebraic closure of~$K$. If  the Galois group  $\Gal{f/K}$   of $f$ over $K$  is isomorphic to the symmetric group $\Sym_{n}$ of degree $n$, then for an extension $L/K$ adjoining any $n-2$ of the zeros, $\alpha_i$, say (after reordering) $L := K\left(\alpha_{3},\alpha_{4},\cdots,\alpha_{n}\right)$, then the torsion subgroup of $E$ does not grow over~$L$, i.e., $$
		E\left(L\right)_{\tors} = E\left(K\right)_{\tors}.
	$$
\end{theorem}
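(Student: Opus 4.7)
The plan is to establish $E\bs{L}\bl{\ell^{\infty}} = E\bs{K}\bl{\ell^{\infty}}$ for each prime $\ell$, from which the theorem follows by the primary decomposition of torsion subgroups. Write $\Lt = K\bs{\alpha_{1},\ldots,\alpha_{n}}$ for the Galois closure of $L/K$, so $\Gal{\Lt/K} \cong \Sym_{n}$ and $\Gal{\Lt/L} = \langle (1\,2) \rangle \cong \Sym_{2}$. Since for $n \ge 5$ the only normal subgroups of $\Sym_{n}$ are $\bm{1}, \Alt_{n}, \Sym_{n}$ and the odd transposition $(1\,2) \notin \Alt_{n}$, $L/K$ contains no nontrivial Galois subextension of $K$, and every proper intermediate subfield $F$ satisfies $\bl{F:K} \ge n$, since transitive faithful permutation representations of $\Sym_{n}$ have degree at least $n$.

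Fix a prime $\ell$ and a torsion point $P \in E\bs{L}$ of prime-power order $\ell^{e}$. Consider the Galois closure $M \subseteq \Lt$ of $K\bs{P}/K$; then $\Gal{M/K}$ is a quotient of $\Sym_{n}$, hence isomorphic to one of $\bm{1}$, $\Sym_{2}$, or $\Sym_{n}$. The first case yields $P \in E\bs{K}$ directly. In the second, $M$ must be the unique quadratic subextension of $\Lt/K$, namely the discriminant field $K\bs{\sqrt{\Disc{f}}}$; but $(1\,2)$ flips the sign of $\sqrt{\Disc{f}}$, so $K\bs{\sqrt{\Disc{f}}} \not\subseteq L$, and since $K\bs{P} \subseteq M \cap L$ this forces $K\bs{P} = K$.

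The main obstacle is the third case $\Gal{M/K} \cong \Sym_{n}$, which forces $M = \Lt \subseteq K\bs{E\bl{\ell^{e}}}$, so $\Sym_{n}$ appears as a quotient of $\Gal{K\bs{E\bl{\ell^{e}}}/K} \subseteq \GL{\ell^{e}}$. The simple factor $\Alt_{n}$ must then be a composition factor of $\GL{\ell^{e}}$; however, the only non-abelian simple composition factor of $\GL{\ell^{e}}$ is $\operatorname{PSL}_{2}\bs{\Fl}$, and only when $\ell \ge 5$. By the classical exceptional isomorphisms between alternating and projective special linear groups, $\Alt_{n} \cong \operatorname{PSL}_{2}\bs{\Fl}$ with $\ell$ prime and $n \ge 5$ occurs only for $\bs{n,\ell} = \bs{5,5}$. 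Hence the third case is impossible for $n \ge 6$, leaving only the exceptional pair $\bs{5,5}$ to handle.

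For $\bs{n,\ell}=\bs{5,5}$, I would carry out a finer analysis enumerating the subgroups of $\Sym_{5}$ containing $\langle(1\,2)\rangle$ with trivial core, namely $\operatorname{Stab}(k)\cong\Sym_{4}$ for $k\in\bm{3,4,5}$ together with subgroups of indices $10, 15, 20$ (of partition, Sylow-$2$, and pointwise-stabilizer types). In each such case, $\Sym_{5}$-conjugation produces multiple Galois conjugates of $P$ defined over distinct subfields of $L$, and linear disjointness among these subfields forces these conjugates to be linearly independent in $E\bl{5}$, yielding $E\bl{5} \subseteq E\bs{F}$ for a proper subfield $F \subsetneq \Lt$. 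Consequently the Galois subextension $K\bs{E\bl{5}} \subseteq \Lt$ is constrained to lie in $\bm{K, K\bs{\sqrt{\Disc{f}}}}$; combining this with the Weil pairing inclusion $\mu_{5} \subseteq K\bs{E\bl{5}}$ and an explicit case analysis of the image of the mod-$5$ Galois representation in $\GLl$ gives $E\bs{K\bs{\alpha_{3}}}\bl{5} = E\bs{K}\bl{5}$, contradicting the existence of $P_{3} \notin E\bs{K}$. A standard induction on $e$ then lifts this $\ell$-torsion conclusion to all $\ell^{e}$-torsion.
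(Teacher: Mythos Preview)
Your first two cases (Galois closure $M$ with $\Gal{M/K}$ trivial or of order $2$) are handled correctly, and the overall strategy of constraining $M$ via the normal subgroup structure of $\Sym_n$ is the right one. The third case, however, contains a genuine gap.

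The inference ``$\Alt_n$ must then be a composition factor of $\GL{\ell^e}$'' is invalid: $\Alt_n$ is a composition factor of the \emph{subgroup} $G=\Gal{K\bs{E\bl{\ell^{e}}}/K}$, and composition factors of a subgroup need not be composition factors of the ambient group. Concretely, $\Alt_5$ embeds in $\operatorname{PSL}_2\bs{\bbf_{11}}$, so it is a composition factor of a subgroup of $\operatorname{GL}_2\bs{\bbf_{11}}$ while not being one of $\operatorname{GL}_2\bs{\bbf_{11}}$ itself. If one instead applies Dickson's classification to the image $\bar G\subseteq\GLl$, the non-abelian simple composition factors that can occur are $\operatorname{PSL}_2\bs{\Fl}$ \emph{and} $\Alt_5$, the latter via case~(D) whenever $\Alt_5\hookrightarrow\operatorname{PGL}_2\bs{\Fl}$, which happens for infinitely many $\ell$. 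So for $n=5$ your composition-factor argument does not isolate $\ell=5$; ruling out case~(D) requires using the full surjection $\bar G\twoheadrightarrow\Sym_5$ and a centrality argument you have not supplied. On top of this, your handling of $\bs{n,\ell}=\bs{5,5}$ is only a plan (``I would carry out\ldots''), with unclear steps: linear independence of more than two conjugates inside the rank-$2$ module $E\bl{5}$ is not meaningful, and the symbol $P_3$ is undefined.

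The paper sidesteps all of this by arranging an \emph{embedding} rather than a surjection. It first disposes of the case $E\bs{K}\bl{\ell}\ne\{O\}$ via Corollary~\ref{cor:ell_non_zero}. When $E\bs{K}\bl{\ell}=\{O\}$, it takes $Q_1\in E\bs{L}$ of exact order $\ell$ and shows (using the absence of non-trivial abelian subextensions in $L/K$) that the Galois closure of $K\bs{Q_1}/K$ is exactly $K\bs{E\bl{\ell}}$; the normal subgroup structure of $\Sym_n$ then forces $K\bs{E\bl{\ell}}=\Lt$, giving $\Sym_n\hookrightarrow\GLl$. Dickson's classification (Lemma~\ref{lem:GL_sub}) now rules out all four cases uniformly for every $n\ge5$, with no exceptional $\bs{n,\ell}$ to analyse. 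The trick you are missing is to reduce to order-$\ell$ torsion first, which converts your surjection from a subgroup of $\GL{\ell^e}$ into an honest embedding of $\Sym_n$ into $\GLl$.
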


\begin{remark}\label{almostall}
	\cite[Theorem 5.40]{Milne} established that when $K$ is a number field, `almost all' polynomials over~$K$ satisfy the condition of Theorem~\ref{thm:Sn}. For the precise meaning of `almost~all', the reader is referred to \cite[Section~9]{Serre97}, {\cite[Section~3]{Serre08}}, and \cite[Proposition~2.2]{IK22}.
\end{remark}

Over the base fields $K=\bbq$ or $\bbq\left(\mu_{\infty}\right)$, the condition $\Im \cyc  \ne \left\{\pm1\right\}$ in Theorem~\ref{thm:non-Gal_p} is always satisfied (except only for $K=\bbq$ and $\ell=3$). Hence, we have the following Corollary~\ref{cor:overQ_and_Qmu} of Theorem~\ref{thm:non-Gal_p} which are proven in Section~\ref{sec:application}.

\begin{cor}\label{cor:overQ_and_Qmu}
	Let $K=\bbq$ (resp. $K=\bbq\left(\mu_{\infty}\right)$). Let $E/K$ be an elliptic curve  defined over $K$ and $L/K$  a non-Galois extension of prime degree $p\ge 3$. If $\ell\ge 5$ for $K=\bbq$ (resp. $\ell\geq 3$ for $K=\bbq\left(\mu_{\infty}\right)$) is a prime such that  $\ell \ne p$ or $E\left(K\right)\left[\ell\right] \ne \left\{O\right\}$, then $$
		E\left(L\right)\left[\ell^{\infty}\right] = E\left(K\right)\left[\ell^{\infty}\right].
	$$
\end{cor}

\subsection{Notations}\label{subsec:notations}
Let $E/K$ be an elliptic curve defined over a field $K$ of characteristic $0$. Throughout this paper, let $\overline{K}$ denote an algebraic closure of $K$, and let $\ell$ denote a prime. For a non-negative integer $m$, we denote a point of order $\ell^m$ in $E\left(\overline{K}\right)$ by $P_{m}, Q_m, T_m,$ etc., depending on context. Given such  a point, say $P_{m}$, we define $P_{k} := \ell^{m-k} P_{m}$ for a non-negative integer $k\leq m$, so that $P_k$ is of order~$\ell^k$. For convenience, we write $E[\ell^\infty]=E\left(\overline{K}\right)[\ell^\infty]$, and similarly for other torsion subgroups, omitting the base field when it is $\overline{K}$.

Denote the absolute Galois group  $\Gal{\overline{K}/K}$  of $K$ by $G_{K}$. Also, for an integer $m\geq 1$, denote by $\zeta_m$ a primitive $m$th root of unity.

Also, throughout this paper, the following notations will appear frequently. \begin{itemize}
	\item $U := \begin{pmatrix}1&1\\0&1\end{pmatrix} \in \GLl$.
	\item $\cyc$: the mod-$\ell$ cyclotomic character from $G_{K}$ to $\Flx$ defined by $\zeta_{\ell}^{\sigma} = \zeta_{\ell}^{\cyc\left(\sigma\right)}$, for $\sigma\in G_{K}$.
	\item $\diag{a,d} := \begin{pmatrix}a&0\\0&d\end{pmatrix}$ for $a,d \in \Fl$.
	\item $I_{2} := \diag{1,1}$.
	\item $\Sym_{n}$: the symmetric group of degree $n$
	\item $\Alt_{n}$: the alternating group of degree $n$
	\item For a basis $\cB=\left\{P_{1},Q_{1}\right\}$ of $E\left[\ell\right]$, we let $r_{\cB}: \Gal{K\left(E\left[\ell\right]\right)/K} \to \GLl$ be a group homomorphism defined by $$
		r_{\cB}\left(\sigma\right)\begin{pmatrix}P_{1}\\Q_{1}\end{pmatrix}
		= \begin{pmatrix}P_{1}^{\sigma}\\Q_{1}^{\sigma}\end{pmatrix}.
	$$
\end{itemize}

\begin{remark}\label{remark:fixedQ}
	Let $\cB=\left\{P_{1},Q_{1}\right\}$ be a basis  of $E\left[\ell\right]$.
\begin{enumerate}[\normalfont (a)]
	\item By the definition of $r_{\cB}$, we have that $$
			r_{\cB} \left( \Gal{K\left(E\left[\ell\right]\right)/K\left(Q_{1}\right)} \right) \subseteq \left\{\begin{pmatrix}a&b\\0&1\end{pmatrix}\in \GLl\right\}.
		$$
	\item {$($\cite[Ch.III.Proposition~8.1]{Silverman}$)$}\label{remark:Weil}
		Via the Weil pairing, we have $$
			\det \;\circ\; r_{\cB} = \cyc.
		$$
	%
	\item For two bases $\cB=\left\{P_{1},Q_{1}\right\}$ and $\mathcal{B}'=\left\{P'_{1},Q'_{1}\right\}$ of $E\left[\ell\right]$, there exists $T \in \GLl$ such that $\begin{pmatrix}P'_{1}\\Q'_{1}\end{pmatrix} = T \begin{pmatrix}P_{1}\\Q_{1}\end{pmatrix}$. It follows that $$
		r_{\mathcal{B}'} = T \;r_{\cB}\; T^{-1}.
	$$
\end{enumerate}
\end{remark}

\section{Auxiliary lemmas for torsion stability}\label{sec:non_zero}


In this section, we establish Galois-theoretic sufficient conditions on $L/K$ and $\ell$ eusuring $E\left(L\right)\left[\ell^{\infty}\right] = E\left(K\right)\left[\ell^{\infty}\right]$, when $E\left(K\right)\left[\ell\right] \ne \left\{O\right\}$. Lemma~\ref{lem:ell_sq} is particularly useful in deriving such conditions, when $E\left(K\right)\left[\ell\right] \cong \left(\bbz/\ell\bbz\right)^{2}$.

\begin{lemma}\label{lem:ell_sq}
	Let $K$ be a field of characteristic $0$ and $E/K$ be an elliptic curve defined over $K$. For a non-negative integer $r$, define a subgroup of $E\left[\ell^{\infty}\right]$, $$
		\scrT_{r} := \left\{T \in E\left[\ell^{\infty}\right]: \ell^{r}T \in E\left(K\right) \right\},
	$$ and let $K_{r}$ be the field of definition $K\left(\scrT_{r}\right)$. Then we have the following: \begin{enumerate}[{\normalfont (a)}]
		\item $K_{r}/K$ is a Galois extension.

		\item Let $m\ge 0$ and $0 \le k \le m$ be integers such that $E\left(K\right)\left[\ell^{m}\right] \cong \bbz/\ell^{k}\bbz \oplus \bbz/\ell^{m}\bbz$. Then, for each integer $r \ge 0$, there exists a basis $\left\{P_{m+r},Q_{m+r}\right\}$ of $E\left[\ell^{m+r}\right]$ such that $E\left(K\right)\left[\ell^{m}\right] = \left\langle P_{k}, Q_{m} \right\rangle$. If $E\left(K\right)\left[\ell^{\infty}\right] = E\left(K\right)\left[\ell^{m}\right]$, then $\scrT_{r} = \left\langle P_{k+r}, Q_{m+r} \right\rangle$.
		
		\item For an integer $r\ge 1$ and $T \in \scrT_{r}$ such that $\ell^{r-1}T \notin E\left(K\right)$, if $E\left(K\right)\left[\ell^{\infty}\right] \supseteq \left(\bbz/\ell^{r}\bbz\right)$, then the field extension $K\left(T\right)$ over $K$ is a Galois extension whose Galois group is isomorphic to a subgroup of exponent $\ell^{r}$ in $\left(\bbz/\ell^{r}\bbz\right)^{2}$.

		\item We assume that $\scrT_{1} \supsetneq E\left(K\right)\left[\ell^{\infty}\right]$. If $E\left(K\right)\left[\ell^{\infty}\right] \supseteq \left(\bbz/\ell^{r}\bbz\right)$ for an integer~$r\geq 1$, then the Galois group of  the extension $K_{r}$ over $K$ is an abelian group of exponent $\ell^{r}$.
		
		\item For an integer $r \ge 1$ such that $E\left(K\right) \supseteq \left(\bbz/\ell^{r}\bbz\right)^{2}$, if at least one of $r \ge 2$, $\ell \ge 3$, or $E\left(K\right) \supseteq \left(\bbz/\ell^{r+1}\bbz\right)^{2}$ holds,  then  $E\left(K_{r}\right)\left[\ell^{\infty}\right] = \scrT_{r}$.
	\end{enumerate}
\end{lemma}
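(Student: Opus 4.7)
The plan is to work through parts (a)--(e) in order, with later parts leveraging earlier ones. Part (a) is a direct Galois-invariance check: if $T\in\scrT_{r}$ and $\sigma\in G_{K}$, then $\ell^{r}\sigma(T)=\sigma(\ell^{r}T)=\ell^{r}T\in E(K)$, so $\sigma(T)\in\scrT_{r}$; hence $\scrT_{r}$ is $G_{K}$-stable, making $K_{r}/K$ Galois. For part (b), the assertion is module-theoretic, so I would invoke the elementary divisor theorem (Smith normal form) for the inclusion $E(K)[\ell^{m}]\cong \bbz/\ell^{k}\bbz\oplus\bbz/\ell^{m}\bbz$ into the free $\bbz/\ell^{m+r}\bbz$-module $E[\ell^{m+r}]\cong (\bbz/\ell^{m+r}\bbz)^{2}$, producing a basis $\{P_{m+r},Q_{m+r}\}$ of $E[\ell^{m+r}]$ adapted to the submodule. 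Setting $P_{k}:=\ell^{m+r-k}P_{m+r}$ and $Q_{m}:=\ell^{r}Q_{m+r}$ yields generators of $E(K)[\ell^{m}]$ of the required orders. Under the further assumption $E(K)[\ell^{\infty}]=E(K)[\ell^{m}]$, the set $\scrT_{r}$ is exactly the preimage of $E(K)[\ell^{m}]$ under multiplication by $\ell^{r}$ on $E[\ell^{m+r}]$, and a direct order count matches $|\scrT_{r}|=\ell^{k+m+2r}=|\langle P_{k+r},Q_{m+r}\rangle|$, giving equality.

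For parts (c) and (d), the central device is the $1$-cocycle $\phi_{T}\colon G_{K}\to E[\ell^{r}]$, $\sigma\mapsto \sigma(T)-T$, which is well defined for $T\in\scrT_{r}$ because $\ell^{r}(\sigma(T)-T)=0$. Under the hypothesis on $E(K)[\ell^{\infty}]$, which forces the $G_{K}$-action on $E[\ell^{r}]$ to be sufficiently trivial to linearize the cocycle, $\phi_{T}$ becomes a group homomorphism whose kernel is $G_{K(T)}$. This simultaneously gives the Galois property of $K(T)/K$ and an embedding $\Gal{K(T)/K}\hookrightarrow E[\ell^{r}]\cong (\bbz/\ell^{r}\bbz)^{2}$; the hypothesis $\ell^{r-1}T\notin E(K)$ forces some $\phi_{T}(\sigma)$ to have order exactly $\ell^{r}$, pinning the exponent to $\ell^{r}$ and proving (c). For (d), applying the analogous argument uniformly across all of $\scrT_{r}$ yields a homomorphism $G_{K}\to \operatorname{Hom}(\scrT_{r},E[\ell^{r}])$ with kernel $G_{K_{r}}$, forcing $\Gal{K_{r}/K}$ to be abelian of exponent $\ell^{r}$.

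For (e), the inclusion $\scrT_{r}\subseteq E(K_{r})[\ell^{\infty}]$ is immediate from definitions. For the reverse, let $P\in E(K_{r})[\ell^{\infty}]$. By (d), $\Gal{K_{r}/K}$ has exponent $\ell^{r}$, so $\sigma^{\ell^{r}}(P)=P$ for every $\sigma\in G_{K}$. The hypothesis $E(K)\supseteq (\bbz/\ell^{r}\bbz)^{2}$ makes $G_{K}$ act trivially on $E[\ell^{r}]$, so the action of $\sigma$ on $E[\ell^{\infty}]$ has the form $\sigma = I+\ell^{r}A_{\sigma}$ for some $A_{\sigma}$. Expanding $(I+\ell^{r}A_{\sigma})^{\ell^{r}}(P)=P$ via the binomial theorem and controlling the $\ell$-adic valuations of the coefficients $\binom{\ell^{r}}{i}\ell^{ri}$ then forces $\sigma(P)-P\in E[\ell^{r}]$, whence $\ell^{r}P\in E(K)$, i.e.\ $P\in\scrT_{r}$.

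The main obstacle, and the reason for the trichotomy in the hypothesis of (e), is the edge case $r=1$, $\ell=2$: there $\binom{2}{2}\ell^{ri}$ has an anomalously low $2$-adic valuation, so the leading-order binomial expansion does not close up, potentially admitting $\ell^{\infty}$-torsion in $E(K_{r})$ outside $\scrT_{r}$. Each of the three alternatives---$r\ge 2$, $\ell\ge 3$, or $E(K)\supseteq (\bbz/\ell^{r+1}\bbz)^{2}$---is designed to avoid or repair this: the first two provide the necessary $\ell$-adic slack in the binomial coefficients, while the third upgrades $\sigma = I+\ell^{r}A_{\sigma}$ to the stronger $\sigma = I+\ell^{r+1}A'_{\sigma}$, furnishing the extra factor of $\ell$ needed to close the argument. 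Handling this edge case cleanly is where I expect the bulk of the technical work to lie.
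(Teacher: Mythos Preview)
Your plan for (a)--(c) is essentially the paper's. For (d), your embedding $G_K\to\operatorname{Hom}(\scrT_r,E[\ell^r])$ is slicker than the paper's route---the paper first proves $\scrT_k/E(K)[\ell^\infty]\cong(\bbz/\ell^k\bbz)^d$ for a fixed $d\in\{1,2\}$ by induction on $k$, then applies (c) to one or two explicit generators of $\scrT_r$ modulo $E(K)[\ell^\infty]$---but your version only yields exponent \emph{dividing} $\ell^r$ and never invokes the hypothesis $\scrT_1\supsetneq E(K)[\ell^\infty]$. That hypothesis is exactly what the paper uses (via the structure of $\scrT_r/E(K)[\ell^\infty]$) to exhibit some $T\in\scrT_r$ with $\ell^{r-1}T\notin E(K)$ and then pin the exponent to $\ell^r$ through (c). The weaker conclusion suffices for (e), so this is minor.

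In (e) there is a genuine gap. The paper does \emph{not} work with an arbitrary $P\in E(K_r)[\ell^\infty]$: it first reduces to a point $V$ with $\ell^{r+1}V\in E(K)$ but $\ell^r V\notin E(K)$, sets $R:=V^\sigma-V\in E[\ell^{r+1}]$ and $S:=R^\sigma-R\in E[\ell]\subseteq E(K)$, and proves by induction that $V^{\sigma^i}=V+iR+\tfrac{i(i-1)}{2}S$; taking $i=\ell^r$ and using $\ell\mid\tfrac{\ell^r(\ell^r-1)}{2}$ (this is exactly where the trichotomy enters) forces $\ell^r R=O$, contradicting $\ell^r V\notin E(K)$. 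Your binomial expansion is the matrix analogue, but the inference ``controlling valuations of $\binom{\ell^r}{i}\ell^{ri}$ forces $\sigma(P)-P\in E[\ell^r]$'' does not follow as stated: from $(I+\ell^rA_\sigma)^{\ell^r}(P)=P$ you obtain only $\ell^{2r}(A_\sigma+\ell D)(P)=0$ for some $D\in M_2(\bbz_\ell)$, and this alone does not kill $\ell^{2r}A_\sigma(P)$ for $P$ of arbitrarily large order. To close the argument you must either perform the paper's reduction (so that $A_\sigma(V)\in E[\ell^{2r+1}]$ and the tail $\ell^{2r+1}D(V)$ vanishes outright), or observe that $A_\sigma+\ell D=A_\sigma(I+\ell D')$ with $I+\ell D'$ a unit in $M_2(\bbz_\ell)$ commuting with $A_\sigma$, and cancel it.
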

\begin{proof}
	(a) For $\sigma\in G_{K}$ and $T \in \scrT_{r}$, we have $\ell^{r} \left(T^{\sigma}\right) = \left(\ell^{r} T\right)^{\sigma} = \ell^{r} T \in E\left(K\right)$. So $\scrT_{r}$ is $G_{K}$-invariant.
	
	(b) First,  by \cite[Lemma~3.1]{IK22} and the given assumption, there exists a basis $\left\{P_{m+r},Q_{m+r}\right\}$ of $E\left[\ell^{m+r}\right]$ such that $E\left(K\right)\left[\ell^{m}\right] = \left\langle P_{k}, Q_{m} \right\rangle$ for some integer $0 \le k \le m$. If $E\left(K\right)\left[\ell^{\infty}\right] = E\left(K\right)\left[\ell^{m}\right]$, we aim to show that $\scrT_{r} = \left\langle P_{k+r}, Q_{m+r} \right\rangle$. Since $\ell^{r} P_{k+r}=P_k$ and  $\ell^{r} Q_{m+r}=Q_{m}$, it is immediate that $P_{k+r}, Q_{m+r} \in \scrT_{r}$, so it is enough to show that $\scrT_{r}$ is generated by $P_{k+r}$ and $Q_{m+r}$. If $T \in \scrT_{r}$, then for some $a,b \in \bbz$, $\ell^{r} T = a P_{k} + bQ_{m}=a\ell^{r} P_{k+r}+b\ell^{r} Q_{m+r}$. Thus, $T - \left( a P_{k+r} + b Q_{m+r} \right) \in E\left[\ell^{r}\right] = \left\langle P_{r}, Q_{r} \right\rangle \subseteq \left\langle P_{k+r}, Q_{m+r} \right\rangle$. This implies that  $T\in \left\langle P_{k+r}, Q_{m+r} \right\rangle$.
	
	(c) To consider the extension $K\left(T\right)/K$, we define a function $\kappa_{T}:G_{K}\rightarrow E\left[\ell^{r}\right]$ by \begin{equation}\label{eqn:kappa}
		\kappa_{T}\left(\sigma\right) = T^{\sigma} - T, \text{ for } \sigma\in G_{K}.
	\end{equation}
	
	Since $\ell^{r} T\in E\left(K\right)$, it follows that for every $\sigma\in G_{K}$, $\ell^{r} T$ is fixed under $\sigma$, so $$
		\ell^{r} \kappa_{T}\left(\sigma\right) = \ell^{r} T^\sigma - \ell^{r} T=O,
	$$ which implies that $\kappa_{T}(\sigma)\in E[\ell^{r}]$. Hence, $\kappa_{T}$ is well-defined. Next, we show that $\kappa_{T}$ is a group homomorphism. Since $\Im(\kappa_{T})\subseteq E[\ell^{r}]\subseteq E\left(K\right)$ by assumption, it follows that  for any $\sigma, \tau \in G_{K}$, $$
		\kappa_{T}\left(\sigma\tau\right) = T^{\sigma\tau} - T = \left(T^{\sigma\tau} - T^{\tau}\right) + \left(T^{\tau} - T\right) = \left(\kappa_{T}\left(\sigma\right)\right)^{\tau} + \kappa\left(\tau_{T}\right) = \kappa_{T}\left(\sigma\right) + \kappa_{T}\left(\tau\right),
	$$ which proves that $\kappa_{T}$ is a group homomorphism. Clearly, the kernel of $\kappa_{T}$ is $\Gal{\overline{K}/K\left(T\right)}$, which implies that $K\left(T\right)$ is a Galois extension over $K$. Therefore, \begin{align}\label{eq:exponent}
		\Gal{K\left(T\right) / K} \cong G_{K}/\Gal{\overline{K}/K\left(T\right)}\cong \Im{\kappa_{T}} \subseteq\left(\bbz/\ell^{r}\bbz\right)^{2}.
	\end{align}
		
	To determine the exponent of $\Gal{K\left(T\right) / K}$, let $L := K\left(T\right)$ and consider the injective group homomorphism $\overline{\kappa_{T}}: \Gal{L/K} \to E\left[\ell^{r}\right]$ defined by $\overline{\kappa_{T}}\left(\sigma\right) = T^{\sigma} - T$, as a restriction of~$\kappa_{T}$. Referring to \eqref{eq:exponent}, the exponent of $\Gal{L/K}$ is not $\ell^{r}$ if and only if $\overline{\kappa_{T}}\left(\Gal{L/K}\right) \subseteq E\left[\ell^{r-1}\right]$. Moreover, since $\ell^{r-1}T^{\sigma} - \ell^{r-1}T = \ell^{r-1} \overline{\kappa_{T}}\left(\sigma\right)$, we have $$
		\overline{\kappa_{T}}\left(\Gal{L/K\left(\ell^{r-1}T\right)}\right) = \overline{\kappa_{T}}\left(\Gal{L/K}\right) \cap E\left[\ell^{r-1}\right].
	$$ Thus, the exponent of $\Gal{L/K}$ is not $\ell^{r}$ if and only if $\overline{\kappa_{T}}\left(\Gal{L/K}\right) = \overline{\kappa_{T}}\left(\Gal{L/K\left(\ell^{r-1}T\right)}\right)$. Since $\overline{\kappa_{T}}$ is injective,  this forces $\ell^{r-1} T \in E\left(K\right)$, contradicting our assumption on $T$.

	(d) Note that for every positive integer $k$, $E\left(K\right)\left[\ell^{\infty}\right]$ is a subgroup of $\scrT_{k}$, and  $\scrT_{k}\subseteq \scrT_{k+1}$. We first show that there exists $d \in \left\{1,2\right\}$ such that, for any positive integer $k$, the non-trivial quotient group $$
		\overline{\scrT}_{k} := \scrT_{k} / \left(E\left(K\right)\left[\ell^{\infty}\right]\right)
	$$ is isomorphic to $\left(\bbz/\ell^{k}\bbz\right)^{d}$. We argue by induction on $k$. For $k=1$, since $E\left[\ell^{s}\right] \cong \left(\bbz/\ell^{s}\bbz\right)^{2}$ for every integer $s \ge1$, and $\overline{\scrT}_{1}$ has exponent~$1$, it follows that there exists $d \in \left\{1,2\right\}$ such that $$
		\overline{\scrT}_{1} \cong \left(\bbz/\ell\bbz\right)^{d}.
	$$

	Now consider  two subgroups $A_{1} \subseteq A_{2} \subseteq \left(\bbz/\ell^{k+1}\bbz\right)^{2}$. Then there exist non-negative integers $e_{ij}$ for $i,j \in \left\{1,2\right\}$ such that $$
		e_{i1}\le e_{i2}, \quad e_{1j} \le e_{2j},
		\quad \text{ and } \quad A_{i} \cong \bbz/\ell^{e_{i1}}\bbz \oplus \bbz/\ell^{e_{i2}}\bbz
	$$ for $i,j \in \left\{1,2\right\}$. If $A_1$ and $A_2$ satisfy that\begin{equation}\label{eqn:c}
	A_{1} = A_{2}[\ell^{k}], \text{ and } A_{1} = \left\{\ell a: a \in A_{2}\right\},
	\end{equation} then for $j=1,2$, we obtain $e_{1j} = \min \left\{k,e_{2j}\right\} = \max\left\{e_{2j}-1,0\right\}$. 
	In particular, if $e_{1j}=0$, then $e_{2j} =0$; and if $e_{1j}=k$, then $e_{2j}=k+1$. Equivalently, for $c\in \left\{0,1,2\right\}$, if $A_{1} \cong \left(\bbz/\ell^{k}\bbz\right)^{c}$, then $A_{2} \cong \left(\bbz/\ell^{k+1}\bbz\right)^{c}$. It is easy to see that $A_{1} = \overline{\scrT}_{k}$ and $A_{2} = \overline{\scrT}_{k+1}$ satisfy \eqref{eqn:c}. Therefore, since  $\overline{\scrT}_{1} \cong \left(\bbz/\ell\bbz\right)^{d}$, the induction shows that $$
		\overline{\scrT}_{k} \cong \left(\bbz/\ell^{k}\bbz\right)^{d}, \text{ for every integer } k\geq 1.
	$$ 

	Now, suppose $E\left(K\right)\left[\ell^{\infty}\right] \supseteq \left(\bbz/\ell^{r}\bbz\right)$ for an integer~$r\geq 1$. Then  the above observation implies that $\overline{\scrT}_{r}$ is generated by one or two elements of order $\ell^{r}$. Equivalently,  $\scrT_{r}$ is generated by $E\left(K\right)\left[\ell^{\infty}\right]$ together with  at most two points $V$ and $W$ satisfying $$
		\langle V\rangle \cap \langle W\rangle \subseteq E\left(K\right), \quad  \ell^{r} V, \ell^{r}W\in E\left(K\right), \quad   \ell^{r-1} V, \ell^{r-1}W \notin E\left(K\right).
	$$ If $\scrT_{r}$ is generated by $E\left(K\right)\left[\ell^{\infty}\right]$ and~$V$, then $\Gal{K_{r}/K} = \Gal{K\left(V\right)/K}$ has exponent $\ell^{r}$ by~(c). If instead $\scrT_{r}$ is generated by $E\left(K\right)\left[\ell^{\infty}\right]$, $V$, and~$W$, then $\Gal{K_{r}/K} = \Gal{K\left(V,W\right)/K}$ embeds into $\left(E\left[\ell^{r}\right]\right)^{2}$ via the group homomorphism of $\left(\kappa_{V},\kappa_{W}\right): G_{K} \to \left(E\left[\ell^{r}\right]\right)^{2}$,  where the group homomorphisms $\kappa_{V}$ and $\kappa_{W}$ are defined by~\eqref{eqn:kappa}. By~(c), the images of $\kappa_{V}$ and $\kappa_{W}$ have exponent $\ell^{r}$. Hence, $\Gal{K_{r}/K}$ has exponent $\ell^{r}$.
	
	(e) The inclusion  $E\left(K_{r}\right)\left[\ell^{\infty}\right] \supseteq \scrT_{r}$  is immediate by definition of $K_{r} = K\left(\scrT_{r}\right)$. Conversely, suppose there exists a point $V \in E\left(K_{r}\right)\left[\ell^{\infty}\right]$ such that $\ell^{r}V \notin E\left(K\right)$. Since $V$ is a torsion point, there exists  the smallest positive integer $s$ such that $\ell^s V\in E\left(K\right)$. Necessarily, $s\geq r+1$. So, replacing $V$ with $\ell^{s-r-1}V$, we may assume that $V \in E\left(K_{r}\right)\left[\ell^{\infty}\right]$ such that $\ell^{r+1}V \in E\left(K\right)$, but $\ell^{r}V \notin E\left(K\right)$.
	
	Then, since $\ell^{r+1}V \in E\left(K\right)$, for any $\sigma \in \Gal{K_{r}/K}$, $R:=V^{\sigma}- V \in E\left[\ell^{r+1}\right]$. Moreover, since $\ell R \in E\left[\ell^{r}\right]\subseteq E\left(K\right)$ by assumption, we have $S:= R^{\sigma} -R \in E\left[\ell\right]$.
	
	If $E\left(K\right) \supseteq \left(\bbz/\ell^{r+1}\bbz\right)^{2}$, then $R\in E\left(K\right)$, so $S=O$, and it then follows by induction that for each positive integer $i$, $$
		V^{\sigma^{i}} = V + iR.
	$$
	
	If  $r \ge 2$ or $\ell \ge 3$, then since $S \in E\left[\ell\right] \subseteq E\left(K\right)$, an induction shows that for each positive integer~$i$, $$
		V^{\sigma^{i}} = V + iR + \dfrac{i(i-1)}{2} S.
	$$

	Now, note that $\ell \mid \frac{\ell^{r}\left(\ell^{r}-1\right)}{2}$ if $r \ge 2$ or $\ell \ge 3$. Since  the exponent of $\Gal{K_{r}/K}$ is $\ell^{r}$ by~(d),  we conclude that for each $\sigma \in \Gal{K_{r}/K}$, $$
		V = V^{\sigma^{\ell^{r}}}
		= \begin{cases}
			V + \ell^{r}R, &\text{ if } E\left(K\right) \supseteq \left(\bbz/\ell^{r+1}\bbz\right)^{2},\\
			V + \ell^{r}R + \frac{\ell^{r}\left(\ell^{r}-1\right)}{2} S = V + \ell^{r}R, &\text{ if } r \ge 2 \text{ or }\ell \ge 3.\end{cases}
	$$ Thus, in either case, we conclude that $\ell^{r} R = O$, and hence we have $\ell^{r} V^{\sigma} = \ell^{r} V$ for all $\sigma \in \Gal{K_{r}/K}$, implying that $\ell^{r} V\in E\left(K\right)$, which contradicts our assumption.
\end{proof}

From now on, we establish Galois-theoretic sufficient conditions on $L/K$ and $\ell$ ensuring  $E\left(L\right)\left[\ell^{\infty}\right] = E\left(K\right)\left[\ell^{\infty}\right]$, when $E\left(K\right)\left[\ell\right] \cong \bbz/\ell\bbz$. As noted in Remark~\ref{remark:fixedQ}, the following Lemma~\ref{lem:Borel}, which concerns subgroups of Borel subgroups in $\GLl$, plays a necessary role.

\begin{lemma}\label{lem:Borel}
	Let $G \subseteq \left\{\begin{pmatrix}a&b\\0&d\end{pmatrix} \in \GLl\right\}$ be a subgroup such that  $\ell \mid |G|$. Then for a subgroup~$H$ of $G$, $H$ is normal in $G$ if and only if either $U= \begin{pmatrix}1&1\\0&1\end{pmatrix}\in H$, or else, for every $\begin{pmatrix}a&b\\0&d\end{pmatrix}\in H$, we have $a=d \in \Flx$.
\end{lemma}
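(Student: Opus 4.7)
The plan is to exploit that $\ell \mid \abs{G}$ forces $G$ to contain an element of the form $U_c := \begin{pmatrix}1 & c\\ 0 & 1\end{pmatrix}$ with $c \in \Flx$: by Cauchy's theorem $G$ has an element of order $\ell$, and by Fermat's little theorem every diagonal entry $a$ of an upper-triangular matrix in $\GLl$ satisfies $a^{\ell} = a$, so any order-$\ell$ element in the Borel has diagonal $(1,1)$. Note that $\left\langle U_c \right\rangle = \left\langle U \right\rangle$ for every $c \in \Flx$, so I may freely pass between different non-trivial unipotents. With this setup, both directions reduce to explicit conjugation computations.

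For $(\Rightarrow)$, suppose $H \triangleleft G$ and $U \notin H$. For any $M = \begin{pmatrix}a & b\\ 0 & d\end{pmatrix} \in H$, I would compute the commutator
\[
	U_c\, M\, U_c^{-1}\, M^{-1} \;=\; \begin{pmatrix}1 & c(d-a)/d\\ 0 & 1\end{pmatrix},
\]
which lies in $H$ by normality. If $a \neq d$, then $c(d-a)/d \in \Flx$, so some integer power of this commutator equals $U$, forcing $U \in H$ and contradicting the assumption. Hence $a = d$, and since $\det M = ad \in \Flx$, in fact $a = d \in \Flx$.

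For $(\Leftarrow)$, there are two cases. If $U \in H$, then for any $M \in H$ and $g = \begin{pmatrix}x & y\\ 0 & z\end{pmatrix} \in G$, a direct calculation gives $gMg^{-1} = M \cdot U_t$ for $t = \bs{b(x-z) + y(d-a)}/(az) \in \Fl$; since $\left\langle U \right\rangle \subseteq H$, the product $M \cdot U_t$ lies in $H$, so $gMg^{-1} \in H$. If instead every element of $H$ has $a = d$, then $H$ is contained in the abelian subgroup $Z := \bm{aI_{2} + \begin{pmatrix}0 & b\\ 0 & 0\end{pmatrix} : a \in \Flx,\; b \in \Fl}$. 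The factorization $aI_{2} + \begin{pmatrix}0 & b\\ 0 & 0\end{pmatrix} = (aI_{2}) \cdot U_{b/a}$ identifies $Z$ with the direct product $\Flx \times \Fl$. Since $\gcd(\ell-1,\ell) = 1$ and $\Fl$ is of prime order, every subgroup of $Z$ has the form $H_{1} \times H_{2}$ with $H_{1} \leq \Flx$ and $H_{2} \in \bm{\bm{0},\Fl}$. A direct check shows that conjugation by any such $g$ sends $aI_{2} + \begin{pmatrix}0 & b\\ 0 & 0\end{pmatrix}$ to $aI_{2} + \begin{pmatrix}0 & bx/z\\ 0 & 0\end{pmatrix}$, which preserves any $H_{1} \times H_{2}$. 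Hence $H \triangleleft G$.

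The main obstacle is the commutator calculation in $(\Rightarrow)$: one must see that the unipotent part of $[U_c, M]$ is proportional to $d - a$, so that it precisely detects the failure of $a = d$, and one must observe that any single non-trivial $U_t$ generates all of $\left\langle U \right\rangle$. Given these, the dichotomy in the conclusion is forced, and $(\Leftarrow)$ reduces to verifying that the two resulting families of candidate normal subgroups are indeed $G$-stable.
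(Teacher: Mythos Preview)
Your proof is correct and follows essentially the same route as the paper. In both directions the core computation is the same: conjugating an upper-triangular matrix by an element of $G$ leaves the diagonal unchanged, so $gMg^{-1}M^{-1}$ is unipotent, and when $a\neq d$ this unipotent is nontrivial. The one minor difference is in the backward direction when every element of $H$ has equal diagonal entries: the paper observes that $H$ then lies in the kernel of $\begin{pmatrix}a&b\\0&d\end{pmatrix}\mapsto ad^{-1}$, notes this kernel is cyclic, and concludes $H$ is characteristic there (hence normal in $G$); you instead classify the subgroups of $Z\cong \Flx\times\Fl$ explicitly and verify invariance under conjugation by hand. Both arguments encode the same structural fact that $Z$ is cyclic of order $\ell(\ell-1)$.
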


\begin{proof}
	Suppose $H$ is a normal subgroup of $G$. If there exists  $A = \begin{pmatrix}a&b\\0&d\end{pmatrix}\in H$ such that $a\neq d$ in $\Flx$, then a direct computation shows that,
	letting $1\leq \left(d-a\right)^{-1}\leq \ell-1$ as an element in~$\Flx$, we have that  $U = U^{d\left(d-a\right)^{-1}} \begin{pmatrix}a&b\\0&d\end{pmatrix} U^{-d\left(d-a\right)^{-1}} \begin{pmatrix}a&b\\0&d\end{pmatrix}^{-1}\in H$.
	
	Conversely, suppose $U\in H$. Then for any $\begin{pmatrix}a&b\\0&d\end{pmatrix} \in H$, letting $1\leq a^{-1}\leq \ell-1$ as an element in $\Flx$,  we have that
	\begin{align}\label{eqn:gen}
	\begin{pmatrix}a&b\\0&d\end{pmatrix} = \begin{pmatrix}a&0\\0&d\end{pmatrix} U^{ba^{-1}},
	\end{align}
	which implies that $H=\left\langle U, \Delta \right\rangle$, where $\Delta := \left\{\diag{a,d} \in H\right\}$.
	If $B\in G$ and $\begin{pmatrix}a&b\\0&d\end{pmatrix} \in H$, a direct computation implies that $$
		B \begin{pmatrix}a&b\\0&d\end{pmatrix} B^{-1} = \begin{pmatrix}a&b'\\0&d\end{pmatrix} = \begin{pmatrix}a&b\\0&d\end{pmatrix}U^{\left(b'-b\right)a^{-1}}\in H.
	$$ Therefore, $H$ is normal in $G$.
	
	If every element of $H$ has identical diagonal entries, then $H$ is a subgroup of  the kernel of~$f$, where  $f : G \to \Flx$ is the group homomorphism defined by $\begin{pmatrix}a&b\\0&d\end{pmatrix} \mapsto ad^{-1}$. Thus, it suffices to show that  $H$ is a characteristic subgroup of $\ker\left(f\right)$.  By \eqref{eqn:gen}, the group $\left\langle U, \diag{a,a}:a\in\Flx\right\rangle$ contains $\ker\left(f\right)$. This group is cyclic, since $U$ and $\diag{a,a}$ commute, and it is generated by two cyclic groups $\left\langle U\right\rangle$ and $\left\{\diag{a,a}:a\in\Flx\right\}$, whose orders are relatively prime. Therefore, its subgroup $\ker\left(f\right)$ is  also cyclic, and hence, $H$ is a characteristic subgroup of $\ker\left(f\right)$.
\end{proof}

\begin{prop}\label{prop:ell_linear}
	Let $K$ be a field of characteristic $0$ and $E/K$ an elliptic curve defined over~$K$ with a point $Q_{1}$ of order $\ell$ such that $E\left(K\right)\left[\ell\right] = \left\langle Q_{1} \right\rangle$. Let $P_{1}\in E\left[\ell\right]$ be a point of order $\ell$ which is linearly independent from $Q_{1}$. \begin{enumerate}[{\normalfont (a)}]
		\item If $\zeta_{\ell} \notin K$, then the cyclic sub-extension $K\left(\zeta_{\ell}\right) $ of $K\left(P_{1}\right) $ over $K$ is non-trivial.
		\item If $\zeta_{\ell} \in K$, then the extension $K\left(P_{1}\right)/K$ is cyclic of degree $\ell$.
		\item For an integer $n\ge1$, if there exists a point $Q_{n+1} \in E\left[\ell^{n+1}\right]$ such that $E\left(K\right)\left[\ell^{\infty}\right] = \left\langle Q_{n} \right\rangle$, $Q_{1} = \ell^{n} Q_{n+1}$, and $Q_{n} =\ell Q_{n+1}$, then one of the following statements holds: \begin{enumerate}[{\normalfont (i)}]
			\item $K\left(Q_{n+1}\right) \cap K\left(\zeta_{\ell}\right)$ is a non-trivial cyclic sub-extension of $K\left(Q_{n+1}\right)$ over $K$.
			\item The extension $K\left(\zeta_{\ell}, Q_{n+1}\right) / K\left(\zeta_{\ell}\right)$ contains an abelian sub-extension of degree $\ell$ over $K\left(\zeta_{\ell}\right)$ and $\left[K\left(Q_{n+1}\right) : K\right]$ is either $\ell$ or $\ell^{2}$.
		\end{enumerate}
	\end{enumerate}
\end{prop}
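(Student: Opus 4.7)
Plan. The approach for parts (a) and (b) is immediate from the shape of the mod-$\ell$ Galois representation. Since $Q_{1}\in E\left(K\right)$, for every $\sigma\in G_{K}$ the matrix $r_{\cB}\left(\sigma\right)$ in the basis $\cB=\left\{P_{1},Q_{1}\right\}$ has bottom row $\left(0,1\right)$, and $K\left(P_{1}\right)=K\left(E\left[\ell\right]\right)$. For (a), $K\left(\zeta_{\ell}\right)\subseteq K\left(E\left[\ell\right]\right)=K\left(P_{1}\right)$, and via $\cyc$ the group $\Gal{K\left(\zeta_{\ell}\right)/K}$ embeds in $\Flx$, hence is cyclic; non-triviality follows from $\zeta_{\ell}\notin K$. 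For (b), $\zeta_{\ell}\in K$ forces the image of $r_{\cB}$ into the unipotent subgroup $\langle U\rangle\cong \Fl$; linear independence of $P_{1}$ and $Q_{1}$ makes $K\left(P_{1}\right)/K$ non-trivial, and a non-trivial subgroup of the cyclic group of order $\ell$ must be the whole group.

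For (c), I first extend $\cB$ to a basis $\left\{P_{n+1},Q_{n+1}\right\}$ of $E\left[\ell^{n+1}\right]$ with $\ell^{n}P_{n+1}=P_{1}$. Using $Q_{n}=\ell Q_{n+1}\in E\left(K\right)$ together with $E\left(K\right)\left[\ell^{\infty}\right]=\left\langle Q_{n}\right\rangle$, a matrix computation yields $Q_{n+1}^{\sigma}=Q_{n+1}+c_{\sigma}P_{1}+d_{\sigma}Q_{1}$ with $c_{\sigma},d_{\sigma}\in \Fl$, satisfying the cocycle identity $c_{\sigma\tau}=c_{\tau}+c_{\sigma}\cyc\left(\tau\right)$. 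Setting $F:=K\left(\zeta_{\ell}\right)$, the restriction $c|_{G_{F}}$ is an additive homomorphism. I then introduce the $K$-rational isogeny $\phi:E\to E':=E/\left\langle Q_{1}\right\rangle$, set $V:=\phi\left(Q_{n+1}\right)$, and obtain $V^{\sigma}=V+c_{\sigma}\phi\left(P_{1}\right)$, where $\phi\left(P_{1}\right)\in E'\left(F\right)$ and $G_{K}$ acts on $\left\langle \phi\left(P_{1}\right)\right\rangle$ via $\cyc$.

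Now I perform a case split. If $K\left(Q_{n+1}\right)\cap K\left(\zeta_{\ell}\right)\neq K$, then (i) holds since $K\left(\zeta_{\ell}\right)/K$ is cyclic. Otherwise, assuming $K\left(Q_{n+1}\right)\cap K\left(\zeta_{\ell}\right)=K$, I aim to establish (ii). The crucial step is to exclude the sub-case $c|_{G_{F}}\equiv 0$ with $c|_{G_{K}}\not\equiv 0$: here $c$ factors through $\Gal{F/K}$, whose order divides $\ell-1$ and hence is coprime to $\ell$, so $H^{1}\left(\Gal{F/K},\Fl\left(\cyc\right)\right)=0$ and $c$ must be a coboundary $c_{\sigma}=\left(\cyc\left(\sigma\right)-1\right)x$ for some $x\in \Fl^{\times}$. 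Then the stabilizer of $V$ equals $G_{F}$, so $K\left(V\right)=F\subseteq K\left(Q_{n+1}\right)$, contradicting the trivial intersection. The surviving possibilities are: $c\equiv 0$, in which case $K\left(Q_{n+1}\right)/K$ is cyclic of order $\ell$ and $F\left(Q_{n+1}\right)/F$ is the desired abelian degree-$\ell$ sub-extension by linear disjointness; or $c|_{G_{F}}$ surjects onto $\Fl$, in which case $F\left(V\right)/F$ is cyclic of order $\ell$ inside $F\left(Q_{n+1}\right)/F$.

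For the degree $\left[K\left(Q_{n+1}\right):K\right]$, let $N:=\mathrm{Stab}_{G_{K}}\left(Q_{n+1}\right)=\left\{\sigma:c_{\sigma}=d_{\sigma}=0\right\}$ and $N':=\left\{\sigma\in G_{K}:c_{\sigma}=0\right\}$. The cocycle identity for $c$ shows $N'$ is a subgroup, and in the surviving cases the image of $c$ on $G_{K}$ is a subgroup of $\Fl$ (either $\left\{0\right\}$ or $\Fl$), so $\left[G_{K}:N'\right]\in\left\{1,\ell\right\}$. On $N'$ the identity for $d$ collapses to $d_{\sigma\tau}=d_{\sigma}+d_{\tau}$, making $d|_{N'}:N'\to\Fl$ a homomorphism, whence $\left[N':N\right]\in\left\{1,\ell\right\}$. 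Since $Q_{n+1}\notin E\left(K\right)$, the product yields $\left[K\left(Q_{n+1}\right):K\right]=\left[G_{K}:N\right]\in\left\{\ell,\ell^{2}\right\}$, completing (ii). The main obstacle is the cohomological exclusion of the coboundary sub-case; everything else is bookkeeping of which reductions of the cocycle become genuine homomorphisms on which subgroups.
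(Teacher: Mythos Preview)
Your proof is correct and, for part~(c), takes a genuinely different route from the paper. The paper introduces the auxiliary Galois extension $\Lt:=K\left(P_{1},Q_{n+1}\right)$ (using Lemma~\ref{lem:ell_sq} for Galois-ness and degree bounds), then splits on whether $\left[\Lt:K\left(\zeta_{\ell}\right)\right]\le\ell^{2}$ (where $\Gal{\Lt/K\left(\zeta_{\ell}\right)}$ is abelian automatically) or $=\ell^{3}$, in which case it embeds $\Gal{\Lt/K\left(\zeta_{\ell}\right)}$ into the Heisenberg group of upper-unitriangular $3\times 3$ matrices over $\Fl$ and reads off the degree-$\ell$ sub-extension from one matrix entry; the bound $\left[K\left(Q_{n+1}\right):K\right]\le\ell^{2}$ is obtained separately via orbit--stabilizer on $\left\{X:\ell X=Q_{n}\right\}$. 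You instead work directly with the $G_{K}$-cocycle $\left(c,d\right)$, pass to the isogenous curve $E'=E/\left\langle Q_{1}\right\rangle$ so that $V=\phi\left(Q_{n+1}\right)$ isolates $c$, and use the vanishing of $H^{1}\left(\Gal{F/K},\Fl\left(\cyc\right)\right)$ (order prime to $\ell$) to exclude the coboundary sub-case---which is exactly the obstruction the paper's Heisenberg case split handles. Your filtration $N\subseteq N'\subseteq G_{K}$ also yields the degree bound more directly than the paper's orbit count. The trade-off: your argument is shorter and avoids $\Lt$ and Lemma~\ref{lem:ell_sq} entirely, at the price of importing the quotient isogeny and a standard cohomology vanishing; the paper stays within explicit Galois-group bookkeeping.
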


\begin{proof}
	(a) It follows since $\zeta_{\ell} \in K(P_1)$, via the Weil pairing.
	
	(b)  Since $Q_{1} \in E\left(K\right)$ and $K\left(E\left[\ell\right]\right) = K\left(P_{1}\right)$, by Remark~\ref{remark:fixedQ}(a), we have $$
		H := r_{\left\{P_{1},Q_{1}\right\}}\left(\Gal{K\left(E\left[\ell\right]\right)/K}\right) \subseteq \left\{\begin{pmatrix}a&b\\0&1\end{pmatrix}\in \GLl\right\}.
	$$ Since $P_{1} \notin E\left(K\right)$, $H$ is non-trivial. If $\zeta_{\ell} \in K$, then since $Q_{1}$ is fixed under $\Gal{K\left(E\left[\ell\right]\right)/K}$ and $H \subseteq \SLl$, it follows from Remark~\ref{remark:fixedQ}(b) that $H= \left\langle U \right\rangle$ and the extension $K\left(P_{1}\right)/K$ is cyclic   of degree $\ell$.

	(c) Since $E\left(K\right)\left[\ell^{\infty}\right] = \left\langle Q_{n} \right\rangle$, by Lemma~\ref{lem:ell_sq}(b) with $k=0$ and $m=n$, we obtain $\left\langle P_{1},Q_{n+1}\right\rangle = \left\{T \in E\left[\ell^{\infty}\right]:\ell T \in E\left(K\right) \right\}$, and hence the extension $\Lt:= K\left(P_{1},Q_{n+1}\right)$ is Galois over $K$ by Lemma~\ref{lem:ell_sq}(a).

	If two extensions $K\left(Q_{n+1}\right)$ and $K\left(\zeta_{\ell}\right)$  are not linearly disjoint over $K$, then (i) holds. Now assume that $K\left(Q_{n+1}\right) \cap K\left(\zeta_{\ell}\right) = K$ and show (ii). Since $K\left(\zeta_{\ell}\right)$ is a Galois extension of $K$, we have \begin{align}\label{eqn:disj}
		\left[K\left(Q_{n+1}\right) : K\right] = \left[K\left(\zeta_{\ell}, Q_{n+1}\right) : K\left(\zeta_{\ell}\right)\right].
	\end{align}
	
	Let $L:=K\left(Q_{n+1}\right)$.
	If $Q_{n+1} \notin E\left(K\left(P_{1}\right)\right)$, then applying Lemma~\ref{lem:ell_sq}(c) to $K\left(P_{1},Q_{n+1}\right) / K\left(P_{1}\right)$ with $r=1$, we see that $\left[K\left(P_{1},Q_{n+1}\right) : K\left(P_{1}\right)\right]$ divides~$\ell^{2}$, which is the order of $\left(\bbz/\ell\bbz\right)^{2}$.
	Since $ \left[K\left(P_{1}\right) : K\left(\zeta_{\ell}\right)\right]=\ell$ by Part~(b) of this proposition,
	it follows that $$
		\left[\Lt : K\left(\zeta_{\ell}\right)\right] = \left[K\left(P_{1},Q_{n+1}\right) : K\left(P_{1}\right)\right] \left[K\left(P_{1}\right) : K\left(\zeta_{\ell}\right)\right] \text{ which divides } \ell^3.
	$$
	Thus, by \eqref{eqn:disj}, it follows that $\left[K\left(Q_{n+1}\right):K\right]$ divides $\ell^{3}$. We note that $\left[K\left(Q_{n+1}\right):K\right]>1$, since $E\left(K\right)\left[\ell^{\infty}\right] = \left\langle Q_{n} \right\rangle$.
	
	To show that $\left[K\left(Q_{n+1}\right):K\right]$ is either $\ell$ or $\ell^{2}$, it suffices to prove that this extension degree is less than or equal to $\ell^{2}$. The group $\Gal{\Lt/K}$ acts on the finite set $$
		S := \left\{X\in E\left[\ell^{n+1}\right]: \ell X =Q_{n}\right\}
	$$ of cardinality $\ell^2$,  and the degree $\left[K\left(Q_{n+1}\right):K\right]$ is equal to  the index of the stabilizer of $Q_{n+1}$ in $\Gal{\Lt/K}$, and this index coincides with the cardinality of the orbit $\left\{Q_{n+1}^{\sigma}:\sigma\in \Gal{\Lt/K}\right\}$, which is less than or equal to $\#S = \ell^{2}$.
	
	 Next, we show that $K\left(\zeta_{\ell}, Q_{n+1}\right) /  K\left(\zeta_{\ell}\right)$ contains a cyclic group of order $\ell$.

	Case 1:  Suppose $[\Lt : K\left(\zeta_{\ell}\right)] \mid \ell^{2}$. Since any group of order dividing $\ell^{2}$ is abelian, the sub-extension $K\left(\zeta_{\ell},Q_{n+1}\right)$ of $\Lt$ is also abelian of degree dividing $\ell^2$ over $K\left(\zeta_{\ell}\right)$. Since  $K\left(Q_{n+1}\right) \subseteq \Lt$ and two extensions $K\left(\zeta_{\ell}\right)$ and $K\left(Q_{n+1}\right)$ are linearly disjoint over $K$, it follows that $\ell \mid \left[K\left(\zeta_{\ell},Q_{n+1}\right) : K\left(\zeta_{\ell}\right)\right]$, and $K\left(\zeta_{\ell}, Q_{n+1}\right) /  K\left(\zeta_{\ell}\right)$ contains a cyclic sub-extension of order~$\ell$.

	Case 2: Suppose $ [\Lt : K\left(\zeta_{\ell}\right) ] = \ell^{3}$. Since $Q_{n}, Q_{1} \in E\left(K\right)$, there exist functions $$
		f, \; g,\; h : \Gal{\Lt/K\left(\zeta_{\ell}\right)} \rightarrow \bbz/\ell\bbz
	$$ such that for each $\sigma \in \Gal{\Lt/K\left(\zeta_{\ell}\right)}$, $$
		Q_{n+1}^{\sigma} - Q_{n+1} = f\left(\sigma\right) P_{1} + g\left(\sigma\right) Q_{1}, \quad \text { and }\quad  P_{1}^{\sigma} = P_{1} + h\left(\sigma\right) Q_{1},
	$$ which induces the function $$
		\phi := \begin{pmatrix} 1&f&g\\0&1&h\\0&0&1\end{pmatrix}: \Gal{\Lt/K\left(\zeta_{\ell}\right)} \to \left\{\begin{pmatrix} 1&a&b\\0&1&c\\0&0&1\end{pmatrix}: a,b,c \in \bbz/\ell\bbz \right\}
	$$  with respect to a choice of $\left\{Q_{n+1},P_{1},Q_{1}\right\}$. Then,  for any $\sigma, \tau \in \Gal{\Lt/K\left(\zeta_{\ell}\right)}$, we have that \begin{align*}
		f\left(\sigma\tau\right) P_{1} + g\left(\sigma\tau\right) Q_{1} &= Q_{n+1}^{\sigma \tau} - Q_{n+1}\\
		& = \left(Q_{n+1}^{\sigma}-Q_{n+1}\right)^{\tau} + \left(Q_{n+1}^{\tau}-Q_{n+1}\right) \\
		&= \left(f\left(\sigma\right) P_{1} + g\left(\sigma\right) Q_{1}\right)^{\tau} + \left(f\left(\tau\right) P_{1} + g\left(\tau\right) Q_{1}\right)\\
		&= \left(f\left(\sigma\right) P_{1} + \left(f\left(\sigma\right)h\left(\tau\right)+g\left(\sigma\right)\right) Q_{1}\right) + \left(f\left(\tau\right) P_{1} + g\left(\tau\right) Q_{1}\right)\\
		&= \left(f\left(\sigma\right)+f\left(\tau\right)\right) P_{1} + \left(f\left(\sigma\right)h\left(\tau\right)+g\left(\sigma\right) + g\left(\tau\right) \right)Q_{1},\end{align*}
		\begin{align*}
		\text{ and } \; h\left(\sigma \tau\right) Q_{1} & = P_{1}^{\sigma\tau} -P_{1}\\
		& = \left(P_{1}^{\sigma} -P_{1}\right)^{\tau} + \left(P_{1}^{\tau} -P_{1}\right)\hspace{4.7cm}\\
		& = \left(h\left(\sigma\right)+h\left(\tau\right)\right) Q_{1}.
	\end{align*}
	Therefore, both $\phi$ and $f$ are group homomorphisms. Clearly, $\Gal{\Lt/K\left(\zeta_{\ell}, Q_{n+1}\right)} \subseteq \ker f$. Hence, the fixed subfield $F$ under $\ker f$ is contained in $K\left(\zeta_{\ell}, Q_{n+1}\right)$, and  $\Gal{F/K\left(\zeta_{\ell}\right)} \cong \Gal{\Lt/K\left(\zeta_{\ell}\right)} / \ker f \cong \Im f$. Since $\left| \Im \phi \right| = \left[\Lt:K\left(\zeta_{\ell}\right)\right] = \ell^{3}$, the homomorphism $f$ can not be trivial, and thus, $\Gal{F/K\left(\zeta_{\ell}\right)} \cong  \Im f =\bbz/\ell\bbz$. Therefore, Case (ii) holds.
\end{proof}

 Combining Lemma~\ref{lem:ell_sq} with Proposition~\ref{prop:ell_linear}, we obtain a Galois-theoretic sufficient condition on $L/K$ and $\ell$ for the equality $E\left(L\right)\left[\ell^{\infty}\right] = E\left(K\right)\left[\ell^{\infty}\right]$, provided that  $E\left(K\right)\left[\ell\right] \ne \left\{O\right\}$, as follows:

\begin{cor}\label{cor:ell_non_zero}
	Let $K$ be a field of characteristic $0$ and $E/K$ an elliptic curve defined over~$K$. Suppose that $E\left(K\right)\left[\ell^{\infty}\right] \ne \left\{ O \right\}$,  and that for an extension $L/K$  we have $E\left(L\right)\left[\ell^{\infty}\right] \supsetneq E\left(K\right)\left[\ell^{\infty}\right]$. Then one of the following holds: \begin{enumerate}[{\normalfont (i)}]
		\item  The extension $L/K$ contains a non-trivial cyclic sub-extension.
		\item  The extension $L\left(\zeta_{\ell}\right)/K\left(\zeta_{\ell}\right)$ contains a cyclic sub-extension of degree $\ell$, and $L/K$ contains a sub-extension of degree $\ell$ or $\ell^{2}$.
	\end{enumerate}
\end{cor}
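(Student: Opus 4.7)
The plan is to pick a point $T\in E\left(L\right)\left[\ell^{\infty}\right]\setminus E\left(K\right)\left[\ell^{\infty}\right]$ of minimal order, so that $\ell T\in E\left(K\right)$ and $T$ lies in $\scrT_{1}:=\left\{X\in E\left[\ell^{\infty}\right]:\ell X\in E\left(K\right)\right\}$, and then to case split on the structure of $E\left(K\right)\left[\ell\right]$, which by hypothesis is non-trivial.

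If $E\left(K\right)\left[\ell\right]\cong\left(\bbz/\ell\bbz\right)^{2}$, then $E\left[\ell\right]\subseteq E\left(K\right)$, so Lemma~\ref{lem:ell_sq}(c) with $r=1$ makes $K\left(T\right)/K$ a non-trivial Galois extension whose Galois group embeds in $\left(\bbz/\ell\bbz\right)^{2}$. Any non-trivial subgroup of $\left(\bbz/\ell\bbz\right)^{2}$ contains a subgroup of order $\ell$, so the Galois correspondence supplies a cyclic sub-extension of $K\left(T\right)/K\subseteq L/K$ of degree $\ell$, establishing conclusion (i).

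Suppose instead $E\left(K\right)\left[\ell\right]\cong\bbz/\ell\bbz$. Choose $n\ge1$ with $\ell T\in E\left(K\right)\left[\ell^{n}\right]$; since the $K$-rational $\ell$-torsion is cyclic, necessarily $E\left(K\right)\left[\ell^{n}\right]\cong \bbz/\ell^{n}\bbz$. Lemma~\ref{lem:ell_sq}(b) with $k=0$, $m=n$, $r=1$ then yields a basis $\left\{P_{n+1},Q_{n+1}\right\}$ of $E\left[\ell^{n+1}\right]$ such that $\left\{P_{1}:=\ell^{n}P_{n+1},\ Q_{1}:=\ell^{n}Q_{n+1}\right\}$ is a basis of $E\left[\ell\right]$ with $Q_{1}\in E\left(K\right)$, $\ell Q_{n+1}=Q_{n}$, and $\scrT_{1}\cap E\left[\ell^{n+1}\right]=\langle P_{1},Q_{n+1}\rangle$. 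Write $T=aP_{1}+bQ_{n+1}$. If $\ell\mid b$, then $bQ_{n+1}\in\langle Q_{n}\rangle\subseteq E\left(K\right)$, forcing $aP_{1}=T-bQ_{n+1}\in E\left(L\right)$ with $\ell\nmid a$ (else $T\in E\left(K\right)$), so $P_{1}\in E\left(L\right)$; Proposition~\ref{prop:ell_linear}(a),(b) then supplies a non-trivial cyclic sub-extension of $L/K$, namely $K\left(\zeta_{\ell}\right)/K$ when $\zeta_{\ell}\notin K$ or the degree-$\ell$ extension $K\left(P_{1}\right)/K$ when $\zeta_{\ell}\in K$, giving (i). If instead $\ell\nmid b$, let $c\equiv ab^{-1}\pmod{\ell}$ and set $Q_{n+1}':=b^{-1}T=cP_{1}+Q_{n+1}\in E\left(L\right)$, where $b^{-1}$ denotes the inverse of $b$ modulo $\ell^{n+1}$. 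Because $\ell P_{1}=O$ and $n\ge1$, the point $Q_{n+1}'$ still satisfies $\ell Q_{n+1}'=Q_{n}$ and $\ell^{n}Q_{n+1}'=Q_{1}$, so $Q_{n+1}'$ meets the hypothesis of Proposition~\ref{prop:ell_linear}(c); since $K\left(Q_{n+1}'\right)\subseteq L$ and a degree-$\ell$ abelian extension is cyclic, the two cases of that proposition yield respectively conclusion (i) or conclusion (ii) of the present corollary.

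The only subtle ingredient is the shear substitution $Q_{n+1}\rightsquigarrow cP_{1}+Q_{n+1}$ in the last sub-case: a minimal $T$ need not coincide with an element of the chosen basis of $\scrT_{1}$, but this substitution preserves both relations $\ell Q_{n+1}=Q_{n}$ and $\ell^{n}Q_{n+1}=Q_{1}$ because $P_{1}$ is $\ell$-torsion. This lets us apply Proposition~\ref{prop:ell_linear}(c) directly to $Q_{n+1}'$ rather than proving a more general version of it, and is where I expect most of the bookkeeping to live.
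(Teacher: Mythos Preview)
Your approach matches the paper's: find $T$ with $\ell T\in E(K)$, split on the rank of $E(K)[\ell]$, and in the cyclic case invoke Proposition~\ref{prop:ell_linear}. There is, however, one gap. The sentence ``Choose $n\ge1$ with $\ell T\in E(K)[\ell^{n}]$; since the $K$-rational $\ell$-torsion is cyclic, necessarily $E(K)[\ell^{n}]\cong \bbz/\ell^{n}\bbz$'' is not justified: cyclicity of $E(K)[\ell]$ only tells you $E(K)[\ell^{n}]$ is cyclic, not that it has order exactly $\ell^{n}$, and more to the point, Proposition~\ref{prop:ell_linear}(c) requires the stronger hypothesis $E(K)[\ell^{\infty}]=\langle Q_{n}\rangle$, not merely $E(K)[\ell^{n}]=\langle Q_{n}\rangle$. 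The fix is to take $n$ with $E(K)[\ell^{\infty}]\cong\bbz/\ell^{n}\bbz$ when this group is finite; if instead $E(K)[\ell^{\infty}]$ is infinite, then $\ell T$ is $\ell$-divisible in $E(K)$, so subtracting a suitable $K$-rational point from $T$ produces a point of order $\ell$ in $E(L)\setminus E(K)$, and your minimality hypothesis then forces $T$ itself to have order $\ell$, landing you directly in the $P_{1}\in E(L)$ sub-case.

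Once $n$ is chosen this way, your $\ell\mid b$ versus $\ell\nmid b$ dichotomy and the shear substitution $Q_{n+1}\mapsto cP_{1}+Q_{n+1}$ are sound; in fact that split is slightly cleaner than the paper's ``$\ell T=O$ versus $\ell T\ne O$'', since the paper's bare assertion that $E(K)[\ell^{\infty}]=\langle\ell T\rangle$ whenever $\ell T\ne O$ glosses over the same issue your $\ell\mid b$ case absorbs.
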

\begin{proof}
	Since $E\left(L\right)\left[\ell^{\infty}\right] \supsetneq E\left(K\right)\left[\ell^{\infty}\right]$, there exists a point $T \in E\left(L\right)\left[\ell^{\infty}\right] \setminus E\left(K\right)\left[\ell^{\infty}\right]$ such that $\ell T \in E\left(K\right)\left[\ell^{\infty}\right]$. Indeed, if not, for a point $R \in E\left(L\right)\left[\ell^{\infty}\right] \setminus E\left(K\right)\left[\ell^{\infty}\right]$, an induction would show that $\ell^{k} R \in E\left(L\right)\left[\ell^{\infty}\right] \setminus E\left(K\right)\left[\ell^{\infty}\right]$ for all $k\geq 1$, which is impossible. After possibly replacing $L$ by a smaller extension, we may therefore assume $L = K\left(T\right)$.
	
	If $E\left(K\right)\left[\ell\right] \cong \left(\bbz/\ell\bbz\right)^{2}$, then by Lemma~\ref{lem:ell_sq}(c) with $r=1$, the extension $K\left(T\right) / K$ has an abelian sub-extension of degree $\ell$, which proves (ii). If $E\left(K\right)\left[\ell\right] \cong \bbz/\ell\bbz$ and $\ell T =O$, then Proposition~\ref{prop:ell_linear}(a) and Proposition~\ref{prop:ell_linear}(b) show that $L/K$ contains a non-trivial cyclic extension, proving~(i). If $E\left(K\right)\left[\ell\right] \cong \bbz/\ell\bbz$ and $\ell T \ne O$, then $E\left(K\right)\left[\ell^{\infty}\right] = \left\langle \ell T\right\rangle$. Parts (i) and (ii) follow from Proposition~\ref{prop:ell_linear}(c)-(i) and Proposition~\ref{prop:ell_linear}(c)-(ii), respectively.
\end{proof}

\section{Proofs of Theorem~\ref{thm:thm2} and Theorem~\ref{thm:Sn}}\label{sec:additional}

We  now  prove Theorem~\ref{thm:thm2} by applying Lemma~\ref{lem:ell_sq} and performing induction on appropriately chosen torsion subgroups of an elliptic curve.

\begin{proof}[Proof of Theorem~\ref{thm:thm2}]
	If $E\left(K\right) \supseteq E\left[\ell^{s}\right]$ for all $s\ge0$, then the equality is trivial. Otherwise, there exists a maximal integer $r$ such that $E\left(K\right) \supseteq \left(\bbz/\ell^{r}\bbz\right)^{2}$. Since $\ell^{m} \ge 3$ and $E\left(K\right) \supseteq \left(\bbz/\ell^{m}\bbz\right)^{2}$, it follows that $\ell\ge 3$ or $r\ge 2$.

	For  $k\geq 0$, set $K_{k} := K\left(\scrT_{k}\right)$. We claim that for each integer $i\ge 0$, $E\left(K_{\left(2^{i}-1\right)r}\right)\left[\ell^{\infty}\right] = \scrT_{\left(2^{i}-1\right)r}$ for all $i$ by induction on $i$:  First, note that \begin{align}\label{subtor}
		E\left(K_{\left(2^{i}-1\right)r}\right)\left[\ell^{\infty}\right]\supseteq \scrT_{\left(2^{i}-1\right)r} \supseteq E\left[\ell^{2^{i}r}\right].
	\end{align} If $i=0$, then it follows directly from the definition of $\scrT_{0}$. Suppose $E\left(K_{\left(2^{i}-1\right)r}\right)\left[\ell^{\infty}\right] = \scrT_{\left(2^{i}-1\right)r}$ for some $i\geq 0$. Then, $$
		\scrT_{\left(2^{i+1}-1\right)r} = \left\{P \in E\left[\ell^{\infty}\right]:\ell^{2^{i}r} P \in \scrT_{\left(2^{i}-1\right)r} = E\left(K_{\left(2^{i}-1\right)r}\right)\left[\ell^{\infty}\right]\right\}=:\scrT_{2^{i}r}'
	$$ which provides an analogous definition of $\scrT_{k}$ over  the base field $K':=K_{\left(2^{i}-1\right)r}$, and $$
		K_{\left(2^{i+1}-1\right)r} = K_{\left(2^{i}-1\right)r}\left(\scrT_{\left(2^{i+1}-1\right)r}\right)=K'(\scrT_{2^{i}r}').
	$$ Therefore, since $\ell \ge 3$ or $2^{i}r \ge 2$, applying Lemma~\ref{lem:ell_sq}(e) over $K_{\left(2^{i}-1\right)r}$ together with considering~\eqref{subtor} yields $$
		E\left(K_{\left(2^{i+1}-1\right)r}\right) \left[\ell^{\infty}\right]
		= E\left(K'(\scrT_{2^{i}r}')\right) \left[\ell^{\infty}\right]
		=\scrT_{2^{i}r}'=\scrT_{\left(2^{i+1}-1\right)r},
	$$ which completes the induction.

	Now for a given $k\geq 0$, choose an integer $i_{0}\geq 0$ such that $\left(2^{i_{0}}-1\right)r \le k < \left(2^{i_{0}+1}-1\right)r$. Then, by the claim and \eqref{subtor}, we have $$
		E\left(K_{\left(2^{i_{0}}-1\right)r}\right) \left[\ell^{\infty}\right] = \scrT_{\left(2^{i_{0}}-1\right)r} \supseteq E\left[\ell^{2^{i_{0}}r}\right] \supseteq E\left[\ell^{k-\left(2^{i_{0}}-1\right)r+1}\right].
	$$ Since $\scrT_{k} = \left\{P \in E\left[\ell^{\infty}\right]:\ell^{k-\left(2^{i_{0}}-1\right)r} P \in \scrT_{\left(2^{i_{0}}-1\right)r} \right\}$ and $K_{k} = K_{\left(2^{i_{0}}-1\right)r}\left(\scrT_{k}\right)$, another application of Lemma~\ref{lem:ell_sq}(e) gives $E\left(K_{k}\right) \left[\ell^{\infty}\right] = \scrT_{k}$.
\end{proof}

To prove Theorem~\ref{thm:Sn}, we need Lemma~\ref{lem:L_cond} and the classification of the subgroups of $\GLl$.

\begin{lemma}\label{lem:L_cond}
	Let $K$ be a field of characteristic $0$, $L/K$  an extension, and $\Lt/K$ a Galois extension over $K$ containing $L$ with Galois group $G$.  Set $H := \Gal{\Lt/L}$. Then, $L/K$ has no non-trivial abelian sub-extension  over $K$ if and only if $G'H = G$, where $G'$ is the commutator subgroup of $G$.
\end{lemma}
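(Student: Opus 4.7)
The plan is to deduce this equivalence directly from the Galois correspondence for the Galois extension $\tilde{L}/K$. Since $\tilde{L}/K$ is Galois with group $G$, the intermediate fields $F$ satisfying $K \subseteq F \subseteq L$ correspond bijectively, via $F \leftrightarrow N = \Gal{\tilde{L}/F}$, to subgroups $N$ with $H \subseteq N \subseteq G$; and such an $F$ is Galois over $K$ precisely when the corresponding $N$ is normal in $G$, in which case $\Gal{F/K} \cong G/N$.

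My first step will be to observe that since $G'$ is normal in $G$, the product $G'H$ is actually a subgroup of $G$, and it clearly contains $H$. By Galois correspondence, it will equal $\Gal{\tilde{L}/F_{0}}$ for a unique intermediate field $F_{0}$ with $K \subseteq F_{0} \subseteq L$. The heart of the argument will then be to identify $F_{0}$ as the maximal sub-extension of $L/K$ that is abelian over $K$. For any intermediate field $F$ and its corresponding subgroup $N = \Gal{\tilde{L}/F}$, the statement ``$F/K$ is abelian'' translates into ``$N$ is normal in $G$ and $G/N$ is abelian'', which is equivalent to the single containment $G' \subseteq N$ (normality of $N$ being automatic once $N \supseteq G'$). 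Combined with the equivalence $F \subseteq L \iff H \subseteq N$, this will show that $F$ is an abelian sub-extension of $L/K$ if and only if $G'H \subseteq N$, i.e.\ if and only if $F \subseteq F_{0}$.

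Thus $F_{0}/K$ will be the maximal abelian sub-extension of $L/K$, and the condition that $L/K$ has no non-trivial abelian sub-extension becomes $F_{0} = K$, which is equivalent via Galois correspondence to $G'H = G$. I expect no real obstacle here, since the proof is essentially a repackaging of standard Galois correspondence; the only mild subtlety worth spelling out is that once $G' \subseteq N$, the normality of $N$ in $G$ is automatic, so the two conditions ``$F/K$ Galois'' and ``$\Gal{F/K}$ abelian'' collapse into the single condition $G' \subseteq N$, which is what lets $G'H$ (rather than a separate normal closure) appear naturally.
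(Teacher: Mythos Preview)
Your proposal is correct and follows essentially the same approach as the paper: both arguments use the Galois correspondence to translate ``$F/K$ is an abelian sub-extension of $L/K$'' into ``$\Gal{\Lt/F} \supseteq G'H$'', and then observe that the absence of a non-trivial such $F$ is equivalent to $G'H = G$. Your write-up is slightly more detailed (explicitly noting that $G'H$ is a subgroup and that normality of $N$ is automatic once $G' \subseteq N$), but the argument is the same.
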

\begin{proof}
	For a sub-extension $F/K$ of $\Lt/K$, $F/K$ is Galois and $\Gal{F/K}$ is abelian if and only if $\Gal{\Lt/F} \supseteq G'$, and $F\subseteq L$ if and only if $\Gal{\Lt/F} \supseteq H$. Hence, $K$ is the only one abelian sub-extension of $L/K$ if and only if the only subgroup of $G$ containing $G'H$ is $G$, i.e., $G=G'H$.
\end{proof}

\begin{lemma}[{\cite[Section~2]{Serre72}}]\label{lem:GL_sub}
	Let $\ell$ be a prime. For a subgroup $H$ of $\GLl$,  let $$
		\overline{H} := H / \left(H \cap \left\{aI_{2} :a\in \mathbb{F}_\ell^\times\right\}\right)
	$$ be the image of $H$ under the natural projection $\GLl \to \operatorname{PGL}_{2}\left(\Fl\right)$. Then one of the following holds:  
	\begin{enumerate}[{\normalfont (A)}]
		\item $H$ equals a group $\left\{A \in \GLl: \det A \in R\right\}$ for some subgroup $R \subseteq \Flx$, and in particular, $H$ contains $\mathrm{SL}_{2}(\mathbb{F}_{\ell})$;
		\item $H$ is a subgroup of a Borel subgroup of $\GLl$ and $\ell \mid \left|H\right|$, i.e., there exists $T \in \GLl$ such that $THT^{-1}$ is a subgroup of $\left\{\begin{pmatrix}a&b\\0&d\end{pmatrix}:a,d \in \Flx, b\in \Fl\right\}$ and $U \in THT^{-1}$;
		\item $H$ is a subgroup of the normalizer $\cN$ of a Cartan subgroup $\cC$ of $\GLl$;
		\item $\overline{H}$ is isomorphic to either $\Alt_{4}$, $\Sym_{4}$, or $\Alt_{5}$.
	\end{enumerate}
\end{lemma}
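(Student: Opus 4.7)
\emph{Proof plan.} The statement is the classical Dickson classification of subgroups of $\GLl$, and my plan is to split the argument according to whether $\ell$ divides $\left|H\right|$. The dichotomy is natural because the Sylow $\ell$-subgroup of $\GLl$ consists of conjugates of a single unipotent element, whereas elements of order prime to $\ell$ are all semisimple and admit a clean eigenspace analysis over $\overline{\Fl}$.

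Suppose first that $\ell \mid \left|H\right|$. By Sylow's theorem $H$ contains a non-identity unipotent element, which after conjugation in $\GLl$ I may take to be $U$. A Sylow $\ell$-subgroup of $\GLl$ is the unipotent radical of a unique Borel, and its normalizer in $\GLl$ is that same Borel. If $H$ has a unique Sylow $\ell$-subgroup, then $H$ normalizes it and hence sits inside a single Borel $B$, giving case~(B). Otherwise $H$ contains a conjugate of $U$ that is not upper triangular; together with $U$ itself, the Bruhat decomposition (the fact that $\SLl$ is generated by its upper and lower unipotent subgroups) forces $H \supseteq \SLl$, so $H = \left\{A\in\GLl:\det A\in R\right\}$ with $R := \det\left(H\right)$, which is case~(A).

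Suppose next that $\ell \nmid \left|H\right|$. Every element of $H$ is then semisimple over $\overline{\Fl}$. The projective image $\overline{H} \subseteq \operatorname{PGL}_{2}\left(\Fl\right)$ has order prime to $\ell$, and by the Klein--Dickson classification of finite subgroups of $\operatorname{PGL}_{2}$ of an algebraically closed field, $\overline{H}$ is cyclic, dihedral, or isomorphic to $\Alt_{4}$, $\Sym_{4}$, or $\Alt_{5}$. The last three options are exactly case~(D). If $\overline{H}$ is cyclic, then $H$ is generated by scalars and one further element, hence abelian; its commuting diagonalizable matrices can be simultaneously diagonalized over $\overline{\Fl}$, so $H$ lies in a split or non-split Cartan $\cC$ and we are in case~(C). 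If $\overline{H}$ is dihedral with cyclic index-$2$ subgroup $\overline{H}_{0}$, then by the previous step the preimage of $\overline{H}_{0}$ lies in a Cartan $\cC$; any element of $H$ mapping to $\overline{H}\setminus\overline{H}_{0}$ acts by conjugation on $\cC$ by either fixing or swapping the two common eigenlines, and hence normalizes $\cC$, so $H \subseteq \cN$, again case~(C).

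The two steps I expect to require the most care are (i) the normalizer computation showing that if $H$ has a single Sylow $\ell$-subgroup then $H$ is contained in a Borel, which pins the normalizer of $\left\langle U\right\rangle$ in $\GLl$ down precisely to the upper triangular Borel, and (ii) in the prime-to-$\ell$ case, descending the simultaneous diagonalization over $\overline{\Fl}$ to an inclusion inside $\GLl$ by realizing the non-split Cartan concretely as the image of $\Fllx \hookrightarrow \GLl$ coming from the regular action of $\Fll$ on $\Fl^{2}$. Since the result is the classical Dickson theorem cited from \cite[Section~2]{Serre72}, the plan is to lay out the structural picture rather than reprove each step from scratch.
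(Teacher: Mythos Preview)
The paper does not give its own proof of this lemma: it is stated with a citation to \cite[Section~2]{Serre72} and used as a black box. Your outline is a correct sketch of the standard Dickson argument (split on whether $\ell$ divides $\left|H\right|$; in the first case use the fact that two distinct order-$\ell$ unipotent subgroups fix distinct lines and hence generate $\SLl$, otherwise the unique Sylow $\ell$-subgroup forces $H$ into a Borel; in the second case invoke the Klein--Dickson list for $\overline{H}$ and lift cyclic/dihedral images back into a Cartan or its normalizer). The two points you flag as delicate are exactly the right ones, and your handling of them is sound: the normalizer of $\left\langle U\right\rangle$ in $\GLl$ is precisely the upper-triangular Borel because a conjugate $gUg^{-1}$ fixes the line $g\cdot\Fl e_{1}$, and in the dihedral case the key observation is that a Cartan is the full centralizer of any of its non-scalar elements, so once the index-$2$ abelian part $H_{0}$ contains a non-scalar $x$, any $h\in H\setminus H_{0}$ sends $x$ to another non-scalar element of $\cC$ and hence satisfies $h\cC h^{-1}=\cC$. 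Since the paper offers no argument of its own here, there is no alternative approach to compare against.
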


Now we prove Theorem~\ref{thm:Sn}.

\begin{proof}[Proof of Theorem~\ref{thm:Sn}]
	We let $\Lt/K$ be the Galois closure of $L/K$, i.e., the splitting field of~$f$.  Consider an isomorphism $\phi: \Gal{\Lt/K} \to \Sym_{n}$ such that $\alpha_{i}^{\sigma} = \alpha_{\phi\left(\sigma\right)\left(i\right)}$. Let $G := \phi\left(\Gal{\Lt/K}\right)=\Sym_{n}$ and $H := \phi\left(\Gal{\Lt/L}\right)$. Then, $H = \left\langle\left(1\; 2\right)\right\rangle$. Since the commutator subgroup $G'$ is equal to $\Alt_{n}$, Lemma~\ref{lem:L_cond}  implies: \begin{equation}\label{eqn:no_non_triv_ab_subextn_1}
		L/K \text{ has no non-trivial abelian sub-extension over } K.
	\end{equation}
	
	Let $\ell$ be a prime. First,  suppose $E\left(K\right)\left[\ell\right] \ne \left\{O\right\}$. 

	If $\ell=2$, then $L\left(\zeta_{\ell}\right) = L$ and $K\left(\zeta_{\ell}\right) = K$. In this case, Corollary~\ref{cor:ell_non_zero}~(ii) implies Corollary~\ref{cor:ell_non_zero}~(i). But  \eqref{eqn:no_non_triv_ab_subextn_1} shows that Corollary~\ref{cor:ell_non_zero}~(i) fails, hence so does Corollary~\ref{cor:ell_non_zero}~(ii). Therefore,  Corollary~\ref{cor:ell_non_zero} gives $$
		E\left(L\right)\left[\ell^{\infty}\right] = E\left(K\right)\left[\ell^{\infty}\right].
	$$

	Now, let $\ell\ge 3$. Since $$
		\Gal{\Lt\left(\zeta_{\ell}\right)/K\left(\zeta_{\ell}\right)} \cong \Gal{\Lt/\Lt \cap K\left(\zeta_{\ell}\right)}\subseteq \Sym_{n}
	$$ and $\Lt \cap K\left(\zeta_{\ell}\right)$ is an abelian extension over $K$, the group $\Gal{\Lt\left(\zeta_{\ell}\right)/K\left(\zeta_{\ell}\right)}$ is a non-trivial normal subgroup of $\Sym_{n}$, hence isomorphic to either $\Sym_{n}$ or $\Alt_{n}$. Consequently, any abelian sub-extension of $\Lt\left(\zeta_{\ell}\right)/K\left(\zeta_{\ell}\right)$ is trivial or quadratic and  the same holds for $L\left(\zeta_{\ell}\right)/K\left(\zeta_{\ell}\right)$, since $L\left(\zeta_{\ell}\right) \subseteq \Lt\left(\zeta_{\ell}\right)$. Thus, Corollary~\ref{cor:ell_non_zero}~(ii) fails, and since  Corollary~\ref{cor:ell_non_zero}~(i) also fails by \eqref{eqn:no_non_triv_ab_subextn_1}, Corollary~\ref{cor:ell_non_zero} again yields $$
		E\left(L\right)\left[\ell^{\infty}\right] = E\left(K\right)\left[\ell^{\infty}\right].
	$$

	Next, suppose $E\left(K\right)\left[\ell\right] = \left\{O\right\}$. Assume for the contradiction that $E\left(L\right)$ contains a point~$Q_{1}$ of order~$\ell$.
	First, we claim that the Galois closure of $K\left(Q_{1}\right)/K$ is $K\left(E\left[\ell\right]\right)$. To see this, it is enough to show the existence of $\sigma_{0} \in G_{K}$ such that $Q_{1}^{\sigma_{0}} \notin \left\langle Q_{1} \right\rangle$. If such a $\sigma_{0}$ does not exists, then $K\left(Q_{1}\right)/K$ is Galois and the function $f: \Gal{K\left(Q_{1}\right)/K} \to \left(\bbz/\ell\bbz\right)^{\times}$ defined by $Q_{1}^{\sigma} = f\left(\sigma\right) Q_{1}$ is an injective group homomorphism. Since $E\left(K\right)\left[\ell\right] = \left\{O\right\}$ and $\Gal{K\left(Q_{1}\right)/K} \cong \Im f \subseteq \left(\bbz/\ell\bbz\right)^{\times}$, $K\left(Q_{1}\right)/K$ is a non-trivial abelian sub-extension of $L/K$, contradicting \eqref{eqn:no_non_triv_ab_subextn_1}. Hence, the Galois closure of $K\left(Q_{1}\right)/K$ is $K\left(E\left[\ell\right]\right)/K$.

	Since $L \supseteq K\left(Q_{1}\right)$, we have $\Lt \supseteq K\left(E\left[\ell\right]\right)$. Put $N:= \phi\left(\Gal{\Lt/K\left(E\left[\ell\right]\right)}\right)$. Then $N$ is a normal subgroup of $G = \Sym_{n}$. For $n \ge 5$, $N$ is $\Sym_{n}$, $\Alt_{n}$, or the trivial subgroup. If $N= \Sym_{n}$, then $K = K\left(E\left[\ell\right]\right)$, contradicting  $E\left(K\right)\left[\ell\right] = \left\{O\right\}$. If $N= \Alt_{n}$, then $\Gal{K\left(E\left[\ell\right]\right)/K} = \Sym_{n}/\Alt_{n} \cong \bbz/2\bbz$. Since $E\left(K\right)\left[\ell\right] = \left\{O\right\}$, it follows that $K\left(Q_{1}\right)=K\left(E\left[\ell\right]\right)$ would give a non-trivial abelian subextension of $L/K$,  contradicting~\eqref{eqn:no_non_triv_ab_subextn_1} in the above. If $N$ is the trivial subgroup, then $\Gal{K\left(E\left[\ell\right]\right)/K} =G/N \cong \Sym_{n}$. By Lemma~\ref{lem:GL_sub}, $\Sym_{n}$ would fit into one of the following structures inside $\GLl$:
	\begin{enumerate}[{\normalfont (A)}]
		\item $\Sym_{n}$ contains a normal subgroup which is isomorphic to $\SLl$;
		\item $\Sym_{n}$ is isomorphic to a subgroup of a Borel subgroup of $\GLl$ and $\left|\Sym_{n}\right|$ is divisible by $\ell$;
		\item $\Sym_{n}$ is isomorphic to a subgroup $H$ of the normalizer $\cN$ of a Cartan subgroup $\cC$ of $\GLl$;
		\item $\Sym_{n}$ contains a cyclic normal subgroup $C$  with quotient $\Sym_{n}/C$   isomorphic to either $\Alt_{4}$, $\Sym_{4}$, or $\Alt_{5}$.
	\end{enumerate}
	Showing that each of (A)~$\sim$~(D) cannot happen, would prove that $E\left(L\right)\left[\ell\right] = \left\{O\right\}$.
	
	If $\ell \in \left\{ 2,3 \right\}$, a size comparison of  the orders of $\Sym_{n}$ and $\GLl$ immediately rules out embeddings $\Sym_{n} \cong \Gal{K\left(E\left[\ell\right]\right)/K} \subseteq \GLl$. So let $\ell \ge 5$.
		
	First, suppose (A) happens. Then since the commutator subgroups satisfy that
	$\left(\SLl\right)'=\SLl=\left(\GLl\right)'$ by \cite[Theorem~1.9]{Gr}, we conclude that $\Alt_{n} \cong \SLl$. Since  $\ell \mid \left|\SLl\right| = \left|\Alt_{n}\right|$, we have $\ell \le n$. But if $n\ge \ell+1$, this contradicts the following since $\ell\geq 5$: $$
		1= \frac{\left|\Alt_{n}\right|}{\left|\SLl\right|} = \frac{\left(\ell-2\right)!}{2}\frac{n!}{\left(\ell+1\right)!}.
	$$ If $n=\ell$, we have $\frac{\left(\ell-2\right)!}{2\left(\ell+1\right)} =\frac{\left|\Alt_{\ell}\right|}{\left|\SLl\right|} = 1$. However, $\frac{\left(5-2\right)!}{2\left(5+1\right)} \ne 1$ for $\ell=5$ and it is easy to show that $\frac{\left(k-2\right)!}{2\left(k+1\right)} > 1$ for all $k\ge6$ by induction.

	Case (B) is impossible, since a Borel subgroup of $\GLl$ has a normal subgroup of order $\ell$ by Lemma~\ref{lem:Borel}, while the order of a normal subgroup of $\Sym_{n}$ for $\ell\geq 5$ is either $1$, $\frac{n!}{2}$, or $n!$.
	
	Suppose (C) happens. First, we note that $\left[ \cN: \cC \right] = 2$ by \cite[Chapter~XVIII.Proposition~12.3]{Lang}. Therefore, $\cC H$ is either $\cN$ or $\cC$. Since $\cC$ is abelian but $H$ is not abelian, it follows that  $\cC H = \cN$. Hence, \begin{align}\label{eq:intersecIndex}
		\left[H:H\cap\cC\right] = \left[\cC H: \cC \right] = \left[\cN:\cC\right] = 2,
	\end{align} which implies that  $H\cap\cC$ is a normal subgroup of index~$2$ in $H\cong \Sym_{n}$, so $H\cap\cC \cong \Alt_{n}$, which is a contradiction since $\cC$ is abelian.
	
	Case (D) is impossible, since the unique cyclic normal subgroup $C$ of $\Sym_{n}$ is the trivial subgroup for  $n\geq 5$.
\end{proof}

\

\section{Over non-Galois extensions of odd prime degree}\label{sec:zero}

In this section, we consider a non-Galois extension $L/K$   of prime degree $p$ and provide sufficient conditions on $K$ and a prime $\ell$ under which the $\ell$-primary torsion of $E(L)$ coincides with that of $E(K)$. When $E\left(K\right)\left[\ell\right] \ne \left\{O\right\}$, Corollary~\ref{cor:ell_non_zero} shows that this equality holds for $\ell \ne p$ almost immediately. Therefore, the main task of this section is to deal with the case $E\left(K\right)\left[\ell\right] = \left\{O\right\}$, which is treated separately  in the proofs of Proposition~\ref{prop:non-Gal_3} and Proposition~\ref{prop:non-Gal_prime}.

\begin{prop}\label{prop:non-Gal_3}
	Let $K$ be a field of characteristic $0$ and $E/K$ an elliptic curve  over $K$ whose affine model is defined by $y^2=f\left(x\right)$, where $f\left(x\right)\in K\left[x\right]$ is a monic cubic polynomial. Let $L/K$ be a non-Galois cubic extension and $\ell$ a prime such that $E\left(K\right)\left[\ell\right]= \left\{O\right\}$. Then:
	\begin{enumerate}[{\normalfont (a)}]
		\item
		If $\ell =2$ and $L$ has no zero of $f$, then $E\left(L\right)\left[2\right] =\left\{O\right\}= E\left(K\right)\left[2\right]$.
		\item 
		If $\ell\ge 3$ and $\Im \cyc \neq \left\{\pm 1\right\}$, then $E\left(L\right)\left[\ell\right] =\left\{O\right\}= E\left(K\right)\left[\ell\right]$.
	\end{enumerate}
\end{prop}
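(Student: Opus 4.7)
The plan for part~(a) is immediate from the Weierstrass model $y^{2}=f\left(x\right)$: the nontrivial $2$-torsion points of $E$ are precisely the affine points $\left(\alpha,0\right)$ where $\alpha$ runs over the zeros of $f$ in $\overline{K}$. So $E\left(L\right)\left[2\right]\neq\left\{O\right\}$ would force $L$ to contain a root of~$f$, contradicting our hypothesis. Combined with the standing assumption $E\left(K\right)\left[\ell\right]=\left\{O\right\}$, this yields~(a) at once.

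For part~(b), I would argue by contradiction: suppose $Q_{1}\in E\left(L\right)$ has order~$\ell$. Since $Q_{1}\notin E\left(K\right)$ and $\left[L:K\right]=3$ is prime, necessarily $K\left(Q_{1}\right)=L$. Let $\Lt/K$ be the Galois closure of $L/K$; since $L/K$ is non-Galois of degree~$3$, $\Gal{\Lt/K}\cong\Sym_{3}$. The three $G_{K}$-conjugates of $Q_{1}$ all lie in $E\left(\Lt\right)\left[\ell\right]$. The first key step is to note that if all three lay in $\left\langle Q_{1}\right\rangle$, they would already be in $L$, making $L/K$ Galois---contradiction. Hence at least two are $\Fl$-linearly independent in $E\left[\ell\right]$, which forces $E\left[\ell\right]\subseteq E\left(\Lt\right)$, i.e.\ $K\left(E\left[\ell\right]\right)\subseteq\Lt$.

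The second key step is a Galois-correspondence argument: $L=K\left(Q_{1}\right)\subseteq K\left(E\left[\ell\right]\right)$, and the latter is Galois over $K$. The only Galois subextensions of $\Lt/K$ correspond to the three normal subgroups of $\Sym_{3}$, namely $1$, $\Alt_{3}$, and $\Sym_{3}$, yielding $\Lt$, a quadratic resolvent, and $K$ respectively. Since a quadratic extension cannot contain the cubic~$L$, I would conclude $K\left(E\left[\ell\right]\right)=\Lt$, so the mod-$\ell$ Galois representation realizes an injection $\Sym_{3}\hookrightarrow\GLl$.

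Finally, Remark~\ref{remark:fixedQ}(b) asserts that the determinant of this representation equals~$\cyc$, so $\Im\cyc$ is a cyclic quotient of $\Sym_{3}$ and thus of order~$1$ or~$2$. For $\ell\geq3$, the unique order-$2$ subgroup of $\Flx$ is $\left\{\pm1\right\}$, so the hypothesis $\Im\cyc\neq\left\{\pm1\right\}$ forces $\Im\cyc=\left\{1\right\}$, placing the image in $\SLl$. But $\Sym_{3}$ does not embed in $\SLl$ for odd~$\ell$: the only involution of $\SLl$ is the central element $-I_{2}$, whereas the transpositions in $\Sym_{3}$ are non-central of order~$2$; this is the final contradiction. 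The crux is the first key step, where the non-Galois hypothesis on $L/K$ gets converted into the containment $K\left(E\left[\ell\right]\right)\subseteq\Lt$; once this is secured, the successive reduction to $\Sym_{3}\hookrightarrow\SLl$ is essentially formal.
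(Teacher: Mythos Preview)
Your proof is correct, and for part~(b) you take a genuinely more elementary route than the paper. Both arguments establish that $K\left(E\left[\ell\right]\right)=\Lt$ and hence $\Gal{K\left(E\left[\ell\right]\right)/K}\cong\Sym_{3}$; the paper does this by invoking the argument from the proof of Theorem~\ref{thm:Sn}, while your conjugate-of-$Q_{1}$ argument is self-contained. From there the paper applies Serre's classification of subgroups of $\GLl$ (Lemma~\ref{lem:GL_sub}), ruling out cases~(A) and~(D) by order, and in each of cases~(B) and~(C) computing explicitly that $\det G=\left\{\pm1\right\}$. You instead observe directly that $\Im\cyc$ is an abelian quotient of $\Sym_{3}$, hence of order at most~$2$; the hypothesis $\Im\cyc\neq\left\{\pm1\right\}$ then forces the image into $\SLl$, which you dispatch with the clean observation that $-I_{2}$ is the unique involution there for odd~$\ell$. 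Your approach avoids the case analysis and the Cartan-normalizer computation entirely. The trade-off is that the paper's case-by-case template is what gets reused in Proposition~\ref{prop:non-Gal_prime} for $p\geq5$, where the Galois group of $\Lt/K$ is no longer pinned down as $\Sym_{3}$ and your abelianization shortcut is not available; so the paper's heavier machinery earns its keep later.
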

\begin{proof}
	First, note that since $L/K$ is a non-Galois cubic extension, for any element  $\alpha \in L \setminus K$, the minimal polynomial $g_\alpha$ of $\alpha$ over $K$ is cubic, and we have  $\Gal{g_\alpha/K} \cong \Sym_{3}$.

	Part~(a) is immediate, since if $E\left(L\right)\left[2\right] \supsetneq E\left(K\right)\left[2\right]$ then $L$ contains a zero of $f$.
	
	To prove (b),  for a prime  $\ell\geq 3$ such that $E\left(K\right)\left[\ell\right] = \left\{O\right\}$, suppose  $E\left(L\right)$ has a point  $Q_{1}$ of order~$\ell$ and show that $\Im \cyc = \left\{\pm 1\right\}$. Because $\left[L:K\right]$ is a prime, we have $L=K\left(Q_{1}\right)$. Following an analogous argument as in the proof of Theorem~\ref{thm:Sn}, the Galois closure of $L/K$ is $K\left(E\left[\ell\right]\right)$. Since $L/K$ is a non-Galois cubic extension, we have that $\Gal{K\left(E\left[\ell\right]\right)/K}\cong \Sym_3$ and, by Lemma~\ref{lem:GL_sub}, there are four possible cases for how $\Sym_3$ can appear in $\GLl$ as follows: \begin{enumerate}[{\normalfont (A)}]
		\item $\Sym_{3}$ contains a normal subgroup which is isomorphic to $\SLl$;
		\item $\Sym_{3}$ is isomorphic to a subgroup of a Borel subgroup of $\GLl$ with $\ell$ dividing $\left|\Sym_{3}\right|=6$;
		\item For a basis $\cB$ of $E\left[\ell\right]$, $H := r_{\cB}\left(\Gal{K\left(E\left[\ell\right]\right)/K}\right)$ is isomorphic to $\Sym_{3}$ as a subgroup of the normalizer $\cN$ of a Cartan subgroup $\cC$ of $\GLl$;
		\item $\Sym_{3}$ contains a cyclic normal subgroup $C$ with the quotient $\Sym_{3}/C$ isomorphic to either $\Alt_{4}$, $\Sym_{4}$, or $\Alt_{5}$.
	\end{enumerate}
	
	Case (A) is impossible by comparing the orders of $\Sym_{3}$ and $\SLl$ for $\ell\geq 3$. Similarly, Case (D) is impossible  by comparing the order of $\Alt_{4}$, $\Sym_{4}$, or $\Alt_{5}$, with the order of $\Sym_{3}$, respectively.
	
	Suppose (B) happens. Then $\ell=3$, since $3 \le \ell \mid \left|\Sym_{3}\right|=6$. Then, considering Lemma~\ref{lem:GL_sub}, there exists a basis $\cB := \left\{P_{1}, Q_{1}\right\}$ of $E\left[3\right]$ such that $$
		G := r_{\cB}\left(\Gal{K\left(E\left[\ell\right]\right)/K}\right) \subseteq \left\{\begin{pmatrix}a&b\\0&d\end{pmatrix}\in \operatorname{GL}_{2}\left(\bbf_{3}\right)\right\}
	$$ and $U \in G$. Then, as in~\eqref{eqn:gen} in the proof of Lemma~\ref{lem:Borel}, $G$ is generated by $U$ and $\Delta := \left\{\diag{a,d}\in G: a,d\in\bbf_{3}^{\times}\right\}$. Since $G\cong \Sym_3$, $\Delta$ has order $2$. If $\Delta = \left\langle \diag{-1,-1} \right\rangle$, $G$ is a cyclic group of order $6$, contradicting that $G \cong \Sym_{3}$ is non-abelian. Hence, this implies that $\det \Delta = \left\{\pm1\right\}$, and by Remark~\ref{remark:fixedQ}(b), we have $\Im \operatorname{cyc}_{3} =\det\left(\Im r_{\cB}\right)= \det G = \det \Delta = \left\{\pm1\right\}$.
	
	Suppose (C) happens. As in \eqref{eq:intersecIndex} of the proof of Theorem~\ref{thm:Sn},  we have $\left[H : H \cap \cC\right] = 2$, and so $H\cap\cC$ is generated by a matrix $R$ of order $3$. Considering the group structure of $H \cong \Sym_{3}$, for $F \in H \setminus \cC$,  we have that $F^{2} = I_{2}$ and $FRF^{-1} = R^{-1}$. Hence, $\left(\det R\right)^{2} = \left(\det \left(FRF^{-1}\right)\right) \left(\det R\right) = 1$ and so $\det R \in \left\{\pm1\right\}$. Then, it implies $\det R=1$ since $R^{3} = I_{2}$, and that $H \cap \cC$ is a subgroup of the Cartan subgroup $\cC$, we have that $\ell \ne 3$ since the order of $\cC$ when $\ell=3$ is either $4$ (split Cartan) or $8$ (non-split Cartan),\ and  so $R$ of order $3$ has two distinct eigenvalues $\omega$ and $\omega^{-1}$ in $\Fll$ where $\omega$ is a primitive cube root of unity. Thus, there exists $T \in \GLll$ such that $T R T^{-1} = \diag{\omega,\omega^{-1}}$. For $F_{*} := T F T^{-1}$, since the equations $F_{*} \diag{\omega,\omega^{-1}} F_{*}^{-1} = \diag{\omega^{-1},\omega}$, and  $F_{*}^{2} = I_{2}$ are satisfied, it follows that there exists $b\in \Fllx$ such that $F_{*} = \begin{pmatrix}0&b\\1/b&0\end{pmatrix}$. Therefore, by Remark~\ref{remark:fixedQ}(b) and the definition of $H$, $\det F = \det F_{*} = -1$ and $\Im \cyc = \det H = \det \left\langle R, F\right\rangle = \left\{\pm1\right\}$.
\end{proof}

\begin{prop}\label{prop:non-Gal_prime}
	Let $K$ be a field of characteristic $0$ and $E/K$ an elliptic curve  over $K$. Let $L/K$ be a non-Galois extension of prime degree $p\ge 5$. For a prime $\ell \ne p$ such that $E\left(K\right)\left[\ell\right] = \left\{O\right\}$, if $\Im \cyc \neq \left\{\pm 1\right\}$, then $E\left(L\right)\left[\ell\right] = \left\{O\right\}= E\left(K\right)\left[\ell\right]$.
\end{prop}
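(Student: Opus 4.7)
My plan is to argue by contradiction: assume $Q_{1}\in E\left(L\right)$ has order $\ell$ and derive $\Im\cyc = \left\{\pm 1\right\}$, contradicting the hypothesis. Since $[L:K]=p$ is prime and $Q_{1}\notin E\left(K\right)$, we have $L=K\left(Q_{1}\right)$. If the Galois orbit of $Q_{1}$ lay inside $\left\langle Q_{1}\right\rangle$, every conjugate would already lie in $L$ and $L/K$ would be Galois; so the orbit contains two $\bbf_{\ell}$-linearly independent points, and the Galois closure of $L/K$ equals $K\left(E\left[\ell\right]\right)$. Set $G := \Gal{K\left(E\left[\ell\right]\right)/K}$. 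The faithful transitive action on the $p$-element orbit of $Q_{1}$ embeds $G \hookrightarrow \Sym_{p}$, and (fixing a basis of $E\left[\ell\right]$) the action on $E\left[\ell\right]$ embeds $G \hookrightarrow \GLl$; by Remark~\ref{remark:fixedQ}(b) the image satisfies $\Im\cyc = \det G$. The stabilizer $H$ of $Q_{1}$ has index $p$ in $G$ and is non-normal (otherwise $L/K$ would be Galois). An analogous orbit argument on $\bbp^{1}\left(\bbf_{\ell}\right)$ shows the $G$-orbit of $\left\langle Q_{1}\right\rangle$ also has size $p$, yielding $\ell \ge p-1$.

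The heart of the proof is a case analysis via Lemma~\ref{lem:GL_sub}. In case~(A) ($G\supseteq \SLl$), the index $\left[\SLl:\SLl\cap H\right]$ must be $1$ (which forces $H$ to be normal through the abelian quotient $G/\SLl$) or $p$ (which forces $p\ge\ell+1$; combined with $\ell\ge p-1$ this gives $\ell=p-1$, impossible for $p\ge 5$ prime). In case~(B) ($G$ in a Borel with $U\in G$), since $\left\langle Q_{1}\right\rangle$ cannot be the Borel's fixed line, the lower-right entry of every element of $H$ is~$1$; non-normality of $H$ then forces this on all of~$G$, so $G\subseteq \bbf_{\ell}\rtimes\bbf_{\ell}^{\times}$ decomposes as $\left\langle U\right\rangle\rtimes D$, yielding $[G:H]=\ell\ne p$, a contradiction. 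In case~(C), the abelian normal subgroup $G_{0}:=G\cap\cC$ of index~$2$ must act transitively on the $p$-set (otherwise $G_{0}=\left\{1\right\}$ and $|G|=2$, contradicting $p\ge 5$); regularity of abelian transitive actions gives $G_{0}\cong\bbz/p\bbz$, so $G\cong\Dih_{p}$. A direct computation in the split or non-split Cartan model then shows $\det\rho=1$ for a generator $\rho$ of $G_{0}$ and $\det\sigma=-1$ for any involution $\sigma\in G\setminus G_{0}$, so $\det G=\left\{\pm 1\right\}$ and $\Im\cyc=\left\{\pm 1\right\}$, contradicting the hypothesis. In case~(D), the scalar part $Z':=G\cap Z\left(\GLl\right)$ must be trivial --- if not, centrality together with faithful transitivity on the $p$-set would force $Z'\cong\bbz/p\bbz$ and $G=Z'\times H$, making $H$ normal --- so $G\cong\bar G\in\left\{\Alt_{4},\Sym_{4},\Alt_{5}\right\}$. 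Only $\Alt_{5}$ admits a subgroup of prime index $\ge 5$, namely $\Alt_{4}$ of index~$5$, so $p=5$ and $H\cong\Alt_{4}$; but then $\Alt_{4}$ must embed into the stabilizer of $Q_{1}$ in $\GLl$, which is isomorphic to $\bbf_{\ell}\rtimes\bbf_{\ell}^{\times}$ with cyclic Sylow $2$-subgroup --- impossible since $\Alt_{4}$ contains $\left(\bbz/2\bbz\right)^{2}$.

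The main technical obstacle I anticipate is matching the permutation-theoretic picture of~$G$ (a transitive subgroup of $\Sym_{p}$ with a non-normal point stabilizer of prime index) against the linear classification of Lemma~\ref{lem:GL_sub}. The decisive new input relative to the $p=3$ proof of Proposition~\ref{prop:non-Gal_3} is exploiting that the stabilizer of a non-zero vector in $\GLl$ is the metacyclic group $\bbf_{\ell}\rtimes\bbf_{\ell}^{\times}$, whose Sylow $2$-subgroup is cyclic; this fact is what rules out case~(D) at $G=\Alt_{5}$, $p=5$. Case~(C) mirrors the $p=3$ analysis but must be verified in both the split and non-split Cartan models, with the Weyl-type involution contributing $\det = -1$ in each setting.
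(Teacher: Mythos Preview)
Your proof is correct and follows the same global architecture as the paper's: identify the Galois closure of $L/K$ with $K\left(E\left[\ell\right]\right)$, embed $G$ simultaneously into $\Sym_{p}$ and $\GLl$, and run through the Dickson classification of Lemma~\ref{lem:GL_sub}. The case-by-case arguments, however, differ in flavor. In~(A) the paper computes $[G:H]=\ell^{2}-1$ in one line, which is never a prime $\ge 5$; your route through the dichotomy $[\SLl:\SLl\cap H]\in\{1,p\}$ together with $\ell\mid p!$ and your $\bbp^{1}$-bound $\ell\ge p-1$ reaches the same contradiction but less directly. In~(B) the paper invokes Lemma~\ref{lem:Borel} to force $\ell\mid[G:H]=p$, whereas you first pin $G$ inside the affine group $\Fl\rtimes\Flx$ via the lower-right-entry character and then read off $[G:H]=\ell$. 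In~(C) the paper works with an explicit diagonal generator of $H$ and the Cartan structure; your argument---$G_{0}=G\cap\cC$ is abelian and transitive on the $p$-set, hence regular of order $p$, so $G\cong\Dih_{p}$, and then the dihedral relation forces $\det\rho=1$, $\det\sigma=-1$---is more conceptual and arguably cleaner. In~(D) the paper splits on $p\ge 7$ versus $p=5$, using a Sylow-normalizer argument in $\Sym_{p}$ for the former and locating $-I_{2}\in G$ for the latter; your observation that the scalars act freely on the $p$-set (hence $Z'=\{1\}$ and $G\cong\overline{G}$), followed by the fact that the vector-stabilizer $\Fl\rtimes\Flx$ has cyclic Sylow~$2$-subgroup and so cannot contain $\Alt_{4}$, is an elegant alternative.
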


\begin{proof}

	Suppose that there exists $Q_{1}\in E\left(L\right)$ of order $\ell$ and we will show that  $\Im \cyc =\left\{\pm 1\right\}$. Then, $L = K\left(Q_{1}\right)$ as  $\left[L:K\right]=p$. Following a similar argument as in the proof of Theorem~\ref{thm:Sn}, the Galois closure $\Lt$ of $L$ over $K$ equals $K\left(E\left[\ell\right]\right)$. Fix a basis $\cB := \left\{P_{1}, Q_{1}\right\}$ of $E\left[\ell\right]$ and let $$
		G := r_{\cB}\left(K\left(E\left[\ell\right]\right)/K\right) \quad \text{ and } \quad H := r_{\cB}\left(K\left(E\left[\ell\right]\right)/L\right).
	$$ Since $\Lt/K$ is the Galois closure of the non-Galois extension $L/K$, it follows that  and that $H$ is not a normal subgroup of $G$. By the primitive element theorem (\cite[Ch.14~Theorem~25]{DF}), $L$ is generated by an element $\alpha \in L$ over $K$. The minimal polynomial $g_{\alpha}$ of $\alpha$ over $K$ has degree $p = \left[K\left(\alpha\right):K\right] = \left[L:K\right]$ and the Galois closure $\Lt/K$ of $L = K\left(\alpha\right)/K$ is the splitting field of $g_{\alpha}$ over $K$. Hence, there exists an injective group homomorphism $\phi: G \to \Sym_{p}$. By Lemma~\ref{lem:GL_sub}, one of the following holds: \begin{enumerate}[{\normalfont (A)}]
		\item $G = \left\{A \in \GLl: \det A \in R\right\}$ for a subgroup $R \subseteq \Flx$;
		\item $G$ is a subgroup of a Borel subgroup of $\GLl$ and $\ell \mid \left|G\right|$;
		\item $G$ is a subgroup of the normalizer $\cN$ of a Cartan subgroup $\cC$ of $\GLl$;
		\item $G$ contains a cyclic normal subgroup $N$ with quotient $G/N$ isomorphic to either $\Alt_{4}$, $\Sym_{4}$, or $\Alt_{5}$.
	\end{enumerate}
	First, we rule out (A), (B), and (D):

	Suppose (A) happens. Then by the definition of $r_{\cB}$ and Remark~\ref{remark:fixedQ}(a), we have $H = \left\{\begin{pmatrix}a&b\\0&1\end{pmatrix} \in \GLl: a \in R\right\}$ and $p = \left[G:H\right] = \frac{\left|R\right|\ell\left(\ell^{2}-1\right)}{\left|R\right|\ell} = \ell^{2}-1$, which is impossible for prime $p\ge 5$.

	Suppose (B) happens. Then, $\left|G\right|$ divides $\left(\ell-1\right)^{2} \ell$, and Lemma~\ref{lem:Borel} shows that every subgroup of $G$ of order  divisible by $\ell$ is  Galois in $G$. Since $\left[G:H\right] = p$ with $H$ non-normal, this forces $p=\ell$, contradicting the assumption $\ell \ne p$.

	Suppose (D) holds. Assume first that $p\ge 7$ or $G/N \not\cong \Alt_{5}$. Then, $p$ divides $\left|G\right|$ but not $\left[G:N\right]$, so $p \mid \left|N\right|$. Since $N$ is cyclic, let $M\subseteq N$ be the unique subgroup of order~$p$. For any $g \in G$, $gMg^{-1}$ is also a cyclic subgroup of order $p$ in $N = gNg^{-1}$, so  $gMg^{-1}=M$. Thus, $M$ is normal in $G$. Since the number of Sylow $p$-subgroups in $\Sym_{p}$ is $\left(p-1\right)!$ and the normalizer of $\left\langle \left( 1\,2\,\cdots\,p\right)\right\rangle$ in $\Sym_{p}$ is $\left\langle \left( 1\,2\,\cdots\,p\right)\right\rangle$ itself, via the embedding $\phi$ of $G$ into $\Sym_{p}$, we conclude that $G=M$ is cyclic, contradicting the non-normality of $H$ in $G$.

	Now suppose that $p=5$ and $G/N \cong \Alt_{5}$. As noted above, $G$ embeds into $\Sym_{5}$. Hence, the order of $G$ is either $5!$ or $\frac{5!}{2}$ and $G$ is isomorphic to either $\Sym_{5}$ or $\Alt_{5}$. In either case, we reach a contradiction by showing that $G$ contains a normal subgroup of order~$2$. Indeed, $G$ contains a subgroup $H$ which is isomorphic to $\bbz/2\bbz \times \bbz/2\bbz$. Since $H$ is abelian, there exists $T \in \GLll$ such that $THT^{-1}$ consists of diagonal matrices. Since $H \cong \bbz/2\bbz \times \bbz/2\bbz$, we have $$
		THT^{-1}=\left\{\diag{\pm1,\pm1}\right\},
	$$ and $-I_{2} \in H$. Therefore, $G$ contains a normal subgroup of order~$2$.

	Thus, case (C) remains.
	Suppose (C) holds. Since $H$ is not normal in $G$, $G$ is not abelian. Following the same argument as in \eqref{eq:intersecIndex} of the proof of Theorem~\ref{thm:Sn}, we have  $\left[G:G\cap\cC\right] = 2$. Since $p = \left[G:H\right]$ is relatively prime to $2$, $G$ is generated by $H$ and $M := G\cap\cC$. Since $\ell \nmid \left|\cN\right|$ and $H \subseteq G \subseteq \cN$, it follows that $\ell \nmid \left|H\right|$, and moreover, since $L=  K\left(Q_{1}\right)$, the group homomorphism $H \to \Flx$ defined by $\begin{pmatrix}a&b\\0&1\end{pmatrix} \mapsto a$ is injective, and thus, $H$ is a cyclic group. Let $\begin{pmatrix}a_{0}&b_{0}\\0&1\end{pmatrix}$ be a generator of $H$. Replacing the basis $\cB = \left\{P_{1},Q_{1}\right\}$ by $\left\{P_{1}+\frac{b_{0}}{a_{0}-1}Q_{1},Q_{1}\right\}$ (here, $a_{0} \ne 1$ since $\ell \nmid \left|H\right|$ and $H$ is non-trivial), Remark~\ref{remark:fixedQ}(c) implies that $H$ is generated by a single diagonal matrix $\diag{a_{0},1}$. Since $$
		2 = \left[G:M\right] = \left[MH:M\right] = \left[H:H \cap M\right] \mid \left|H\right|,
	$$ the order of $a_{0} \in \Flx$ is even and $H \cap M$ is generated by $\diag{a_{0}^{2},1}$. If $a_{0}^{2} \ne 1$, then $M$ consists of diagonal matrices since $M \subseteq \cC$ is abelian, which forces $G= MH$ to be abelian, contradicting the existence of $H$ not a normal subgroup of $G$. Therefore, $a_{0}^{2} = 1$. Again, since $H$ is not a normal subgroup of $G$, it follows that $a_{0}=-1$ and $H \cap M = \left\{I_{2}\right\}$, thus, $|H|=2$. In other words, $G$ is a semi-direct product of $H$ acting on $M$. A matrix $A \in M$ of order $p$ generates $M$ because $\left|M\right| = \left[G:H\right] = p$. Because $H$ is not a normal subgroup of $G$, the action of $H$ on $M$ which defines the semi-direct product $G$ is not trivial. Since $H \cong \bbz/2\bbz$, the unique non-trivial action satisfies $\diag{-1,1} \;A \;\diag{-1,1}^{-1} = A^{-1}$. Hence, $\left(\det A\right)^{2} = \left(\det \left(\diag{-1,1}A\diag{-1,1}^{-1}\right)\right) \left(\det A\right) = \left(\det A^{-1}\right) \left(\det A\right)=1$, and so $\det A \in \left\{\pm1\right\}$. Then, since $A^{p}=I_{2}$, we have $\det A=1$ and $\Im \cyc = \det G = \left\langle \det A, \diag{-1,1}\right\rangle = \left\{\pm1\right\}$ by Remark~\ref{remark:fixedQ}(b).
\end{proof}

Now we obtain Theorem~\ref{thm:non-Gal_p} as follows:

\begin{proof}[Proof of Theorem~\ref{thm:non-Gal_p}]
	Part~(a) follows from Corollary~\ref{cor:ell_non_zero} for $p\neq \ell$, and from the argument analogous to one in the proof of Theorem~\ref{thm:Sn} for $p=3$.
	
	Part~(b) follows from Proposition~\ref{prop:non-Gal_3}(b) if $p=3$ and from  Proposition~\ref{prop:non-Gal_prime} for $\ell\neq p$ if $p\neq 3$.

	For Part~(c), suppose that $E\left(L\right)\left[2\right] \supsetneq \left\{O\right\}= E\left(K\right)\left[2\right]$. We aim to show $p=3$. $L$ contains a zero $\alpha$ of $f$ which does not lie in $K$. Since $f$ has a cubic polynomial with no zeros in $K$, it is irreducible over $K$,  and thus $\left[K\left(\alpha\right):K\right] = 3$. As $p = \left[L:K\right]$ is   prime and $L \supseteq K\left(\alpha\right)$, it follows that $3 = \left[K\left(\alpha\right):K\right] = \left[L:K\right] = p$.
\end{proof}

\section{Applications of Theorem~\ref{thm:non-Gal_p}}\label{sec:application}

As an application of Theorem~\ref{thm:non-Gal_p}, we obtain  Corollary~\ref{cor:ref_Go17} and Corollary~\ref{cor:ref_Na16}, which refines \cite[Theorem~1]{Go17} and \cite[Theorem~1]{Na16} as follows:

\begin{proof}[Proof of Corollary~\ref{cor:ref_Go17}]
	\cite[Theorem~1]{Go17} classifies all torsion structures that can occur over quintic extensions of $\bbq$. To specialize to the non-Galois quintic case, it remains to rule out the groups $\bbz/11\bbz$ and $\bbz/25\bbz$ from that list when $L/\bbq$ is non-Galois.

	If $E\left(L\right)_{\tors} \cong \bbz/11\bbz$, then by Mazur's theorem~\cite{Mazur}, ${E\left(\bbq\right)}_{\tors} = \left\{O\right\}$. Theorem~\ref{thm:non-Gal_p}(b) shows that this is impossible in a non-Galois quintic extension, since $\Im\operatorname{cyc}_{13} = \left(\bbz/13\bbz\right)^{\times}$.

	If $E\left(L\right)_{\tors} \cong \bbz/25\bbz$, then by Mazur's theorem~\cite{Mazur}, ${E\left(\bbq\right)}_{\tors}$ is isomorphic to either the trivial group or $\bbz/5\bbz$. However, (b) and (a) in Theorem~\ref{thm:non-Gal_p} prove that ${E\left(\bbq\right)}_{\tors}$ can not be isomorphic to either the trivial group or $\bbz/5\bbz$.
\end{proof}

\begin{proof}[Proof of Corollary~\ref{cor:ref_Na16}]
	\cite[Theorem~1]{Na16} classifies all torsion structures that can occur over cubic extensions of $\bbq$. To specialize to the non-Galois cubic case, it remains to rule out the groups $\bbz/13\bbz$, $\bbz/21\bbz$, and $\bbz/2\bbz \oplus \bbz/14\bbz$ from that list when $L/\bbq$ is non-Galois. 

	If $E\left(L\right)_{\tors} \cong \bbz/13\bbz$, then by Mazur's theorem~\cite{Mazur}, ${E\left(\bbq\right)}_{\tors} = \left\{O\right\}$. Theorem~\ref{thm:non-Gal_p}(b) shows that this is impossible in a non-Galois cubic extension, since $\Im\operatorname{cyc}_{13} = \left(\bbz/13\bbz\right)^{\times}$.

	In \cite[Theorem~1]{Na16}, $E\left(L\right)_{\tors} \cong \bbz/21\bbz$ only for the elliptic curve $E$ labeled by 162b1 in \cite{LMFDB} and the unique cubic sub-extension $L$ of $\bbq\left(\zeta_{9}\right)/\bbq$. Therefore, this case is ruled out under our assumption on non-Galois $L/\bbq$.

	Suppose $E\left(L\right)_{\tors} \cong \bbz/2\bbz \oplus \bbz/14\bbz$. If $E\left(\bbq\right)_{\tors}$ has order not divisible by~$7$, then since $\Im\operatorname{cyc}_{7} = \left(\bbz/7\bbz\right)^{\times}$, Theorem~\ref{thm:non-Gal_p}(b) implies that $E\left(L\right)_{\tors}$ also cannot have order divisible by~$7$, which is a contradiction. Therefore, we must have $E\left(\bbq\right)_{\tors} \cong \bbz/7\bbz$ by Mazur's theorem~\cite{Mazur}. Writing $E/\bbq$ by $y^{2} = f\left(x\right)$, where  $f \in \bbq\left[x\right]$ is a cubic polynomial, $f$ has no $\bbq$-rational zero but splits completely over $L$. Since $\left[L:\bbq\right]=3$,  $L$ is necessarily the splitting field of $f$, hence Galois, leading a contradiction.
\end{proof}

In Theorem~\ref{thm:non-Gal_p}, for a non-Galois extension $L/K$ of prime degree $p$ and a prime $\ell \ne p$, if $E\left(K\right)\left[\ell\right] = \left\{O\right\}$, then a sufficient condition for $E\left(L\right)\left[\ell^{\infty}\right] = E\left(K\right)\left[\ell^{\infty}\right]$ is expressed  in terms of $\Im \cyc$. By restricting $\Im \cyc$, we obtain Corollary~\ref{cor:overQ_and_Qmu} as a further application of Theorem~\ref{thm:non-Gal_p}, as follows:

\begin{proof}[Proof of Corollary~\ref{cor:overQ_and_Qmu}]
	
	First, Let $K=\bbq$. If $E\left(\bbq\right)\left[\ell^{\infty}\right] \ne \left\{O\right\}$, then $E\left(L\right)\left[\ell^{\infty}\right] = E\left(\bbq\right)\left[\ell^{\infty}\right]$, so Theorem~\ref{thm:non-Gal_p}(a) completes the proof. On the other hand, if $E\left(\bbq\right)\left[\ell^{\infty}\right] = \left\{O\right\}$, then since  $\Im \cyc = \left(\bbz/\ell\bbz\right)^{\times}$, which is different from $\left\{\pm1\right\}$ when $\ell \ge 5$, Theorem~\ref{thm:non-Gal_p}(c) completes the proof.
	
	If $K=\bbq\left(\mu_{\infty}\right)$, the reasoning is the same as in the previous case, noting that $\Im \cyc = \left\{1\right\}$, which differs from $\left\{\pm1\right\}$ when $\ell \ge 3$.
    
\end{proof}

\noindent{\bf Acknowledgement.} The authors thank Daeyeol Jeon for pointing out the relevant reference~\cite{JS20}.

\end{document}